\newtheorem{theorem}{Theorem}[section]
\newtheorem{lemma}[theorem]{Lemma}
\newtheorem{prop}[theorem]{Proposition}
\newtheorem{corol}[theorem]{Corollary}
\theoremstyle{definition}
\newtheorem{defin}[theorem]{Definition}
\newtheorem{exam}[theorem]{Example}
\newtheorem{exams}[theorem]{Examples}
\theoremstyle{remark}
\newtheorem{remk}[theorem]{Remark}
\numberwithin{equation}{section}
\def\nid{\noindent}
\def\La{\Lambda}      \def\Th{\Theta}
\def\Ga{\Gamma}
\def\al{\alpha}       \def\de{\delta}
\def\be{\beta}        \def\eps{\varepsilon}
\def\ga{\gamma}
\def\si{\sigma}       
\def\ze{\zeta}        
 \def\mR{\mathbb R}
\def\aK{\mathbbm k}
 \def\gR{\mathfrak r}
 \def\dN{\mathfrak N}
 \def\dR{\mathfrak R}
\def\dM{\mathfrak M} 
 \def\kT{\mathcal T}
 \def\kU{\mathcal U}
 \def\fR{\mathbf r}
\def\sC{\mathsf C} 
\def\sD{\mathsf D} 
\def\sE{\mathsf E} \def\sR{\mathsf R}
\def\sF{\mathsf F} 
\def\sG{\mathsf G} \def\sT{\mathsf T}
\def\sI{\mathsf I} 
\def\sJ{\mathsf J} 
\def\sL{\mathsf L}
 \def\rT{\mathrm T}
 \def\rU{\mathrm U}
\def\cA{\mathscr A} 
\def\cB{\mathscr B} 
\def\cC{\mathscr C} 
\def\cD{\mathscr D} 
 \def\cR{\mathscr R}
 \def\cT{\mathscr T}
\def\cI{\mathscr I} 
\def\cJ{\mathscr J}
\def\qA{{\boldsymbol A}} 
\def\qB{{\boldsymbol B}} 
\def\qC{{\boldsymbol C}} \def\qP{{\boldsymbol P}}
 \def\qQ{{\boldsymbol Q}}
 \def\qR{{\boldsymbol R}}
\def\qH{{\boldsymbol H}}
\def\tA{\boldsymbol{\tilde{A}}}		\def\tH{\boldsymbol{\tilde{H}}}
\def\oA{\boldsymbol{\bar{A}}}		\def\oH{\boldsymbol{\bar{H}}}
\def\DF{\sD\sF}			\def\LF{\sL\sF}		\def\RF{\sR\sF}
\def\DI{\sD\sI}			\def\LI{\sL\sI}		\def\RI{\sR\sI}
\def\DG{\sD\sG}			\def\LG{\sL\sG}		\def\RG{\sR\sG}
\def\DJ{\sD\sJ}			\def\LJ{\sL\sJ}		\def\RJ{\sR\sJ}
\def\och{\bar{\chi}}		\def\tB{\boldsymbol{\tilde{B}}}
\def\mt{\mbox{-}}			\def\xx{\times}
\def\*{\otimes}		\def\+{\oplus}		
\def\bop{\bigoplus}	\def\cc{\boldsymbol{\cdot}}
\def\sb{\subset}         
\def\spe{\supseteq}      \def\sbe{\subseteq}
\def\0{\emptyset}		\def\8{\infty}
\def\ito{\stackrel{\sim}{\to}}
\def\xarr{\xrightarrow}
\def\ch{^{\scriptscriptstyle\vee}}
\def\<{\langle}		\def\>{\rangle}
\def\pto{\stackrel{\, \scriptscriptstyle+}{\to}}
\def\dagg{\mathop\dagger\nolimits}
\def\mtr#1{\begin{pmatrix}#1\end{pmatrix}}
\def\smtr#1{\left(\begin{smallmatrix}#1\end{smallmatrix}\right)}
\def\bmd#1#2#3{\left(\begin{smallmatrix}#1\\#2\end{smallmatrix}\ #3\right)}
\def\Hom{\mathop\mathrm{Hom}\nolimits}
\def\RHom{\mathop{\mR\mathrm{Hom}}\nolimits}
\def\Ext{\mathop\mathrm{Ext}\nolimits}
\def\Tor{\mathop\mathrm{Tor}\nolimits}
\def\gdim{\mathop\mathrm{l.gl.dim}\nolimits}
\def\pdim{\mathop\mathrm{pr.dim}\nolimits}
\def\ddim{\mathop\mathrm{der.dim}\nolimits}
\def\wdim{\mathop\mathrm{w.dim}\nolimits}
\def\Tri{\mathop\mathrm{Tri}\nolimits}
\def\Ker{\mathop\mathrm{Ker}\nolimits}
\def\Ob{\mathop\mathrm{Ob}\nolimits}
\def\Im{\mathop\mathrm{Im}\nolimits}
\def\End{\mathop\mathrm{End}\nolimits}
\def\ann{\mathop\mathrm{ann}\nolimits}
\def\rad{\mathop\mathrm{rad}\nolimits}
\def\Ver{\mathop\mathrm{Ver}\nolimits}
\def\Per{\mathop\mathrm{Perf}\nolimits}
\def\El{\mathop\mathsf{El}\nolimits}
\def\Cok{\mathop\mathrm{Cok}\nolimits}
\def\Md{\mbox{-}\mathrm{Mod}}
\def\md{\mbox{-}\mathrm{mod}}
\def\Id{\mathrm{Id}}
\def\op{^\mathrm{op}}
\def\bac{Backstr\"om }			\def\pbac{piecewise Backstr\"om}
\def\sor{semi-orthogonal decomposition}
\def\iff{if and only if }
\begin{document}

\title{Backstr\"om algebras}
\author{Yuriy Drozd}
\thanks{This work was accomplished during the visits of the author to the Max-Plank-Institut of Mathematics and the University of Paderborn, and he is grateful
to these institutions for their hospitality and financial support.}
\address{Institute of Mathematics, National Academy of Sciences of Ukraine, Tereschenkivska str. 3, 01024 Kyiv, Ukraine}
\email{y.a.drozd@gmail.com, drozd@imath.kiev.ua}
\urladdr{www.imath.kiev.ua/~drozd}
\keywords{\bac rings, \bac pairs, \bac curves, derived categories, partial tilting}
\subjclass[2020]{16E35, 16E10, 18G80}

\begin{abstract}
 We introduce \emph{\bac pairs} and \emph{\bac rings}, study their derived categories and construct for them a sort of \emph{categorical resolutions}. For the 
 latter we define the global dimension, construct a sort of \sor{s} of the derived category and deduce that the derived dimension of a \bac ring is at most $2$. 
 Using this \sor, we define a description of the derived category as the category of elements of a special bimodule.
 We also construct a partial tilting for a \bac pair to a ring of triangular matrices and define the global dimension of the latter.
\end{abstract}

\maketitle
\tableofcontents

 \section*{Introduction}

 \bac orders were introduced in \cite{rinrog}, where it was shown that their representations are in correspondence with those of quivers or species. 
 A special class of \bac orders are \emph{nodal orders}, which appeared (without this name) in \cite{pure} as such pure noetherian algebras that the 
 classification of their finitely generated modules is tame. In \cite{BD-nodal} tameness was also proved for the derived categories of nodal orders. 
 Global analogues of nodal algebras, called \emph{nodal curves}, were considered in \cite{BD-tilt,DVol1,DVol2}. Namely, in \cite{BD-tilt} a sort of tilting 
 theory for such curves was developed, which related them to some quasihereditary finite dimensional algebras. In \cite{DVol1} a criterion was found 
 for a nodal curve to be tame with respect to the classification of vector bundles, and in \cite{DVol2} it was proved that the same class of curves 
 has tame derived categories. It was clear that the tilting theory of \cite{BD-tilt} can be extended to a general situation, namely, to \emph{\bac curves}, 
 i.e. non-commutative curves having \bac orders as their localizations. Nodal orders and related gentle algebras appear in studying mirror symmetry, 
 see, for instance, \cite{LekPol}. Finite dimensional analogues of nodal orders, called \emph{nodal algebras}, were introduced in \cite{dz1,Zembyk}. 
 In the latter paper their structure was completely described. In \cite{Zembyk2} it was shown that such important classes as gentle and 
 skewed-gentle algebras are nodal. In \cite{arXiv-gentle} a tilting theory was developed for nodal algebras, which was applied to the study 
 of derived categories of gentle and skewed-gentle algebras.
 
 This paper is devoted to a tilting theory for \emph{\bac rings}, which are a straightforward generalization of \bac orders and algebras. In Section~1 we 
 propose a variant of partial tilting, which generalizes the technique of minors from \cite{BDG}. In Section~2 we introduce \emph{\bac pairs} as the pairs 
 of semiperfect rings $\qH \spe \qA$ with common radical and \emph{\bac rings} (\emph{\pbac\ rings}) as the rings $\qA$ that occur in \bac pairs with 
 hereditary (respectively, picewise hereditary) $\qH$. 
 We construct the \emph{Auslander envelope} $\tA$ of a \bac pair and calculate its global dimension. Actually, this global 
 dimension only depends on the global dimension of $\qH$. In particular, Auslander envelopes for \bac rings are of global dimension at most 2. 
 In Section~3 we apply the tilting technique to show that the derived category of the algebra $\qA$ is connected by a recollement with the derived category 
 of its Auslander envelope. It implies that the derived dimension of $\qA$ in the sense of Rouquier \cite{Rouq} is not greater than that of the Auslander 
 envelope. In Section~4 we consider a recollement between the derived categories of the algebra $\qH$ and of the Auslander envelope. It is used 
 to calculate the derived dimension of the Auslander envelope, thus obtaining an upper bound for the derived dimension of the algebra $\qA$. 
 In particular, we prove that the derived dimension of a \bac or \pbac\ algebra is at most $2$. Moreover, if $\qA$ is a \bac or \pbac\ algebra of Dynkin type,
 then either it is piecewise hereditary of Dynkin type, so $\ddim\qA=0$, or $\ddim\qA=1$. In Section~5 we establish an equivalence between the category 
 $\cD(\tA)$ and a bimodule category. Such equivalence gives a useful instrument for calculations in this derived category (see, for instance, 
 \cite{BekMerk2,BekMerk1,BD-nodal,BDnodproj,arXiv-gentle,DVol2}). 
 In Section 6~we consider another partial tilting for the Auslander envelope $\tA$ of a \bac pair, relating its derived category by a recollement to the derived 
 category of an algebra $\qB$ of triangular matrices which looks simpler than the Auslander algebra. In this case we calculate explicitly the global
 dimension of $\qB$ and the kernel of the partial tilting functor $\sF : \cD(\qB) \to \cD(\qA)$.
 
 The author is thankful to the referee for important remarks that allowed to essentially improve the paper.

 \section{Partial tilting}
  \label{sec1} 
  
  Let $\cT$ be a triangulated category, $\dR\sbe \Ob\cT$. We denote by $\Tri(\dR)$ the smallest strictly full triangulated subcategory containing $\dR$ and closed under coproducts (it means that if a coproduct of objects from $\Tri(\dR)$ exists in $\cT$, it belongs to $\Tri(\dR)$). For a DG-category $\cR$ we denote by $\cD(\cR)$ its derived category \cite{Keller-DG}. The following result is a generalization of \cite[Prop.\,2.6]{Lunts}.

\begin{theorem}\label{t11} 
Let $\dR$ be a subset of the set of compact objects of $\Ob\cD(\cA)$, where $\cA$ is a Grothendieck category. We consider the DG-category $\cR$ 
with the set of objects $\dR$ and the sets of morphisms $\cR(T,R) = \RHom(T,R)$. Define the functor $\sF : \cD(\cA)\to \cD(\cR\op)$ mapping a complex $C$ 
to the DG-module $\sF C = \RHom_{\cD(\cA)}(- ,C)$ restricted onto $\dR$.

\begin{enumerate}
\item The restriction of $\sF$ onto $\Tri(\dR)$ is an equivalence $\Tri(\dR)\to\cD(\cR\op)$.

\item There is a recollement diagram in the sense of \cite[1.4.3]{BBD}.\,%
 \footnote{\,Note that $\dR$ is not necessarily \emph{recollement-defining} in the sense of \cite{NS}.}
\begin{equation}\label{e11} 
  \xymatrix{ \Ker\sF \ar[rr]|{\,\sI\,} && \cD(\cA) \ar@/_1em/[ll]|{\,\sI^*\,}  \ar@/^1em/[ll]|{\,\sI^!\,}  \ar[rr]|{\,\sF\,}  &&
  	\cD(\cR\op)  \ar@/_1em/[ll]|{\,\sF^*\,}  \ar@/^1em/[ll]|{\,\sF^!\,}, }
\end{equation}
 where $\sI$ is the embedding.
  \\
  \emph{Recall that it means that the following conditions hold:}
     \begin{enumerate}
	 \item  $F$ and $I$ are exact. 
 	 \item  $\sF\sI=0$.
	 \item  $F^*$ and $F^!$ are, respectively, a left and a right adjoint functors to $F$.
	 \item  Both adjunction morphisms $\eta :\Id_{\cD(\cR\op)}\to \sF\sF^*$ and $\ze : \sF\sF^!\to \Id_{\cD(\cR\op)}$ are isomorphisms.
 	\item  The same holds for the triple $(\sI,\sI^*,\sI^!$).
 \end{enumerate}
 \emph{Note that the condition 1.4.3.4 from \cite{BBD} is a consequence of the other ones, see \cite[9.2]{Neeman}.}
\end{enumerate}
\medskip\noindent
\emph{If $\dR$ generates $\cD(\cA)$, we obtain an equivalence $\cD(\cA)\simeq\cD(\cR\op)$, as in \cite{Lunts}. If $\dR$ consists of one object $R$, 
we obtain an equivalence $\Tri(R)\simeq\cD(\qR\op)$, where $\qR = \RHom(R, R)$.}
\end{theorem}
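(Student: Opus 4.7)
The plan is to produce both adjoints $\sF^*$ and $\sF^!$ of $\sF$ essentially by hand, derive part (1) from $\sF^*$ being fully faithful with essential image $\Tri(\dR)$, and then assemble the recollement \eqref{e11} by the standard Bousfield--Neeman localization theory applied to the kernel of $\sF$.

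I begin with the left adjoint. Every object of $\cD(\cR\op)$ admits a semi-free resolution built from shifts and coproducts of the representable DG-modules $h_R = \cR(-, R)$, $R \in \dR$ (Keller). Setting $\sF^*(h_R) = R$ and extending cocontinuously and triangulated-ly along such resolutions produces a coproduct-preserving triangulated functor $\sF^* : \cD(\cR\op) \to \cD(\cA)$ landing in $\Tri(\dR)$. To verify the adjunction $\sF^* \dashv \sF$, the DG-Yoneda lemma and the definition of $\sF$ give, for $R \in \dR$ and $C \in \cD(\cA)$,
\[
\Hom_{\cD(\cR\op)}(h_R, \sF C) = H^0 (\sF C)(R) = H^0 \RHom_{\cD(\cA)}(R, C) = \Hom_{\cD(\cA)}(R, C) = \Hom_{\cD(\cA)}(\sF^* h_R, C),
\]
and both sides are triangulated and send coproducts in the first variable to products, so the identification extends from representables to all of $\cD(\cR\op)$. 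The unit $h_R \to \sF \sF^* h_R = \sF R$ is an isomorphism on representables by construction; since $\sF$ preserves coproducts (each $R \in \dR$ being compact) and both functors are triangulated, the full subcategory on which $\eta : \Id \to \sF \sF^*$ is an isomorphism is closed under coproducts, shifts and cones, and therefore equals all of $\cD(\cR\op)$. Hence $\sF^*$ is fully faithful, its essential image coincides with $\Tri(\dR)$, and the restriction of $\sF$ to $\Tri(\dR)$ is a quasi-inverse, which proves (1).

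For (2), I would obtain $\sF^!$ from Brown representability: since every $R \in \dR$ is compact, $\sF$ preserves arbitrary coproducts, and $\cD(\cA)$ is well-generated for any Grothendieck category $\cA$ by Neeman's theorem, so a right adjoint $\sF^!$ exists. The counit $\ze : \sF \sF^! \to \Id$ is an isomorphism by the same strategy as for $\eta$: it suffices to check on the representables $h_R$, where the adjunction identity reduces it to an isomorphism, and then extend by cocontinuity. With both adjoints in hand and $\sF \sF^* \cong \Id \cong \sF \sF^!$, the kernel $\Ker \sF$ is both localizing and colocalizing in $\cD(\cA)$, and the standard theory of recollements attached to such subcategories (Neeman, \emph{Triangulated Categories}, \S 9.2) produces the embedding $\sI$ with its two adjoints and completes the diagram; the same reference accounts for the remark that condition 1.4.3.4 of \cite{BBD} is automatic. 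The main obstacle is the careful construction of $\sF^*$: one must verify that the extension of $h_R \mapsto R$ along semi-free resolutions is independent of the resolution chosen and genuinely cocontinuous and triangulated at the level of derived categories, which requires standard but nontrivial DG-homotopy bookkeeping. Once this is in place, the remainder is a formal consequence of adjunction, DG-Yoneda, and compactness of the objects in $\dR$.
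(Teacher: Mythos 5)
Your route is genuinely different from the paper's. The paper proves (1) first, by a direct d\'evissage on Hom-groups inside the homotopy category of K-injective complexes (full faithfulness on $\Tri(\dR)$ by two applications of the ``the subcategory where the map is bijective is triangulated, closed under coproducts and contains $\dR$'' argument, essential surjectivity because the representables generate $\cD(\cR\op)$), and only then produces $\sF^*$ \emph{from above}: Brown representability for $\Tri(\dR)$ gives a right adjoint $\Th$ to the inclusion $\sE:\Tri(\dR)\to\cD(\cA)$, and $\sF^*:=\sE\sF'$ with $\sF'$ a quasi-inverse of $\sF|_{\Tri(\dR)}$; the adjunction is checked using $\sF\Th\simeq\sF$. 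You instead build $\sF^*$ \emph{from below}, Keller-style, by sending $h_R\mapsto R$ and extending along semi-free resolutions, deduce (1) from $\eta$ being invertible, and then go after $\sF^!$. That is a legitimate alternative, but note that the ``nontrivial DG-homotopy bookkeeping'' you defer is exactly the content of your construction: the clean way to discharge it is to observe that the K-injective enhancement makes $\dR$ into a $\cR$-$\cA$ DG bimodule $X$ and to take $\sF^*=-\otimes^{\mathbf L}_{\cR}X$, whose adjunction with $\RHom(X,-)=\sF$ holds already at the DG level; extending object-wise Hom-isomorphisms from representables, as you sketch, does not by itself produce a natural unit.

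The one step that does not work as stated is your treatment of $\ze:\sF\sF^!\to\Id$. For $\eta$ your d\'evissage is fine because $\sF$ and $\sF^*$ both preserve coproducts; but $\sF^!$ is a right adjoint obtained from Brown representability, and you have given no reason why $\sF\sF^!$ should preserve coproducts (in the merely well-generated category $\cD(\cA)$ the usual compactness criterion is not available), so ``check on representables and extend by cocontinuity'' is unjustified, and it is also not clear what ``the adjunction identity reduces it to an isomorphism'' means on $h_R$ --- the triangle identities alone do not give $\ze_{h_R}$ invertible. The repair is easy but different: since the $h_R[n]$ are compact generators of $\cD(\cR\op)$, it suffices that $\Hom(h_R[n],\ze_M)$ be bijective for all $R,n,M$, and this follows from the chain $\Hom(h_R,\sF\sF^!M)\simeq\Hom(\sF^*h_R,\sF^!M)\simeq\Hom(R,\sF^!M)\simeq\Hom(\sF R,M)=\Hom(h_R,M)$ together with a (necessary) check that this composite is indeed $\Hom(h_R,\ze_M)$; equivalently, invoke the standard fact that for a functor with both adjoints the left adjoint is fully faithful if and only if the right adjoint is, which is precisely what the paper cites as \cite[Cor.\,2.3]{BDG} at this point. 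With that substitution, and with the recollement assembled from the two fully faithful adjoints as you indicate (this part is standard), your argument goes through.
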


 \begin{proof}
 (1) We identify $\cD(\cA)$ with the homotopy category $\cI(\cA)$ of \emph{K-injective complexes}, i.e. such complexes $I$ that $\Hom(C,I)$ 
 is acyclic for every acyclic complex $C$, and suppose that $\dR \sbe \cI (\cA)$. Then $\RHom$ coincides with $\Hom$ within the category $\cI(\cA)$ 
 so, for $C \in \cI(\cA)$, $\sF C = \Hom_{\cI(\cA)}(-,C))$ restricted onto $\dR$. The full subcategory of $\cI (\cA)$ consisting of such complexes $C$ 
 that the natural map $\Hom_{\cI (\cA)}(R, C) \to\Hom_{\cD(\cR\op)}(\sF R, \sF C)$ is bijective for all $R \in\dR$ contains $\dR$, is strictly full, triangulated 
 and closed under coproducts, since all objects from $\cR$ are compact. Therefore, it contains $\Tri(\dR)$. Quite analogously, the full subcategory 
 of such complexes $C$ that the natural map $\Hom_{\cI (\cA)}(C, C')\to \Hom{\cI (\cA)}(\sF C, \sF C')$ is bijective for every $C'\in\Tri(\dR)$ 
 also contains $\Tri(\dR)$. Hence the restriction of $\sF$ onto $\Tri(\dR)$ is fully faithful. Moreover, as the functors $\Hom_\cR(- ,R)$, where $R$ 
 runs through $\dR$, generate $\cD(\cR\op)$, the functor $\sF$ is essentially surjective. Therefore, restricted to $\Tri(\dR)$, it gives an equivalence 
 $\Tri(\dR)\to \cD(\cR)$.

(2) Note that $\cD(\cR\op)$ is cocomplete and compactly generated, hence satisfies the Brown representability theorem \cite[Thm.\,8.3.3]{Neeman}. 
 Therefore, it is true for $\Tri(\dR)$ too. Then \cite[Prop.\,9.1.19]{Neeman} implies that a Bowsfield localization functor exists for $\Tri(\dR)\sbe\cD(\cA)$ 
 and \cite[Prop.\,9.1.18]{Neeman} implies that the embedding $\sE :\Tri(\dR)\to \cD(\cA)$ has a right adjoint $\Th : \cD(\cA)\to\Tri(\dR)$. 
 Let $\sF' :\cD(\cR\op)\to\Tri(\dR)$ be a quasi-inverse to the restriction of $\sF$ onto $\Tri(\dR)$. In particular, $\sF'$ is a left adjoint to this restriction and 
 the adjunction $\sF\sF'\to \Id_{\cD(\cR\op)}$ is an isomorphism. Then 
 $\sF C = \Hom_{\cI (\cA)}(- , C)|_\dR \simeq \Hom_{\cI (\cA)}(-, \Th C)|_\dR = \sF\Th C.$ 
 Set $\sF^* = \sE\sF'$. Since $\sF'M \in\Tri(\dR)$ for every $M \in\cD(\cR\op)$,
 \begin{equation}\begin{split}
   \Hom_{\cI (\cA)}(\sF^*M, C)&\simeq  \Hom_{\Tri(\dR)}(\sF'M, \Th C) \simeq \\
		&\simeq \Hom_{\cD(\cR\op)}(M, \sF\Th C) \simeq \Hom_{\cD(\cR\op)}(M,\sF C),
\end{split} \notag\end{equation}
for any $M \in \cD(\cR\op)$ and $C \in\sI(\cA)$. Hence $\sF^*$ is a left adjoint to $\sF$. If, moreover, $C \in \Tri(\dR)$, we obtain
\[
 \Hom_{\cD(\cR\op)}(\sF\sF^*M, \sF C) \simeq \Hom_{\cI (\cA)}(\sF^*M, C) \simeq \Hom_{\cD(\cR\op)}(M, \sF C).
\]

\nid
As $\sF$ is essentially surjective, it implies that $\eta: \sF\sF^* \to \Id_{\cD(\cR\op)}$ is an isomorphism. As all objects from $\dR$ are compact, 
$\sF$ respects coproducts, hence has a right adjoint $\sF^!$ \cite[Thm.\,8.4.4]{Neeman}. Now it follows from 
\cite[Cor.\,2.3]{BDG} that $\ze:\sF\sF^!\to \Id_{\cD(\cR\op)}$ is an isomorphism and there is a recollement diagram \eqref{e11}.

 \end{proof}
 
 Note that $\Im \sF^* = \Tri(\dR)$ by construction, but usually $\Im \sF^! \ne \Tri(\dR)$, though it is equivalent to $\Tri(\dR)$.
 \begin{corol}\label{c12} 
  Under conditions and notations of the preceding theorem, suppose that $\Hom_{\cD(\cA)}(R,T[m]) = 0$ for $R,T \in\dR$ and $m\ne 0$. Then
the functor $\sF$ induces an equivalence $\Tri(R)\ito \cD(\cR\op)$, where $\cR$ is the category with the set of objects $\dR$ and the sets of morphisms 
$\cR(A,B) = \Hom_{\cD(\cA)}(A,B)$. 

\emph{In this situation we call the functor $\sF$ a \emph{partial tilting functor.}}
 \end{corol}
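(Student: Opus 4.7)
The plan is to deduce this from Theorem~\ref{t11} by observing that the Ext-vanishing hypothesis collapses the DG-category built there to an ordinary linear category.

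First I would invoke Theorem~\ref{t11} to obtain the equivalence $\Tri(\dR)\ito \cD(\cR_{DG}\op)$, where $\cR_{DG}$ denotes the DG-category of that theorem, whose objects are the elements of $\dR$ and whose morphism complexes are $\RHom_{\cD(\cA)}(T,R)$ (computed, as in the proof of the theorem, via K-injective representatives).

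Second, I would note that the hypothesis $\Hom_{\cD(\cA)}(R,T[m])=0$ for $m\ne 0$ says precisely that every morphism complex of $\cR_{DG}$ has cohomology concentrated in degree zero, and that this zeroth cohomology equals $\Hom_{\cD(\cA)}(T,R)=\cR(T,R)$. Hence $\cR_{DG}$ is formal: it sits in a zigzag of DG-functors
\[
  \cR_{DG}\longleftarrow \tau_{\le 0}\cR_{DG}\longrightarrow \cR,
\]
where the left arrow is the inclusion of the good truncation (a DG-subcategory since truncation is compatible with composition) and the right arrow is the projection onto cohomology in degree zero (a DG-functor to $\cR$ viewed as a DG-category concentrated in degree zero). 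Because the Hom-complexes of $\cR_{DG}$ have cohomology in a single degree, both arrows are quasi-equivalences.

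Third, I would apply the standard principle (see \cite{Keller-DG}) that a quasi-equivalence of DG-categories induces a triangulated equivalence of derived categories. Passing to opposites, the zigzag yields $\cD(\cR_{DG}\op)\simeq\cD(\cR\op)$, and composing with the equivalence from Theorem~\ref{t11} gives the desired $\Tri(\dR)\ito \cD(\cR\op)$. I do not anticipate any serious obstacle here: the entire content is the formality of a DG-category whose cohomology happens to lie in a single degree, which is routine. The only points that warrant a moment's attention are verifying that the truncation and projection are genuine DG-functors and checking that the composed equivalence still agrees with (the restriction of) the functor $\sF$, so that the name \emph{partial tilting functor} is justified.
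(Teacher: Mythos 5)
Your argument is correct and is exactly the reasoning the paper leaves implicit: the corollary is stated without proof because it follows from Theorem~\ref{t11} once one notes that the hypothesis forces the Hom-complexes of the DG-category to have cohomology concentrated in degree zero, so the zigzag of quasi-equivalences through the good truncation (Keller's invariance of derived categories under quasi-equivalence) identifies $\cD(\cR_{DG}\op)$ with $\cD(\cR\op)$. Your two flagged verification points (truncation/projection being DG-functors, compatibility with $\sF$) are indeed routine and pose no obstacle.
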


\section{\bac pairs}
\label{sec2} 

Recall \cite{Bass-findim,Lambek} that a \emph{semiperfect ring} is a ring $\qA$ such that $\qA/ \rad \qA$ is a semisimple artinian ring and idempotents 
can be lifted modulo $\rad\qA$. Equivalently, as a left (or as a right) $\qA$-module, $\qA$ decomposes into a direct sum of modules with local 
endomorphism rings.
 
 \begin{defin}\label{d21} 
  \begin{enumerate}
  \item  A \emph{\bac pair} is a pair of semiperfect rings $\qH\spe \qA$ such that $\rad\qA = \rad\qH$. We denote by $\qC(\qH,\qA)$ the \emph{conductor}
   of $\qH$ in $\qA$:
   \[
   \qC(\qH,\qA) = \{\al \in \qA \mid \qH\al\sbe \qA\}= \ann(\qH/\qA)_\qA
   \]   
   (the right subscript ${_\qA}$ means that we consider $\qH/\qA$ as a right $\qA$-module). Obviously, $\qC(\qH, \qA)\spe \rad \qA$, so both $\qA/\qC$ and
   $\qH/\qC$ are semisimple rings.

\item We call a ring $\qA$ a (left) \emph{\bac ring} (\emph{\pbac\ ring}) if there is a \bac pair $\qH\spe \qA$, where the ring $\qH$ is left hereditary
(respectively, $\qH$ is \emph{left piecewise hereditary} \cite{Happel}, i.e. derived equivalent to a left hereditary ring). If, moreover, both $\qA$ and $\qH$ 
are finite dimensional algebras over a field $\aK$, we call $\qA$ a \emph{\bac algebra} (respectively, \emph{\pbac\ algebra}).
  \end{enumerate}
 \end{defin}

\begin{remk}\label{r22} 
 If $e$ is an idempotent in $\qA$, then $\rad(e\qA e) = e(\rad \qA)e$, hence, if $\qH\spe\qA$ is a \bac pair, so is also $e\qH e\spe e\qA e$. 
It implies that if $P$ is a finitely generated projective $\qA$-module, $\qA' = \End_\qA P$ and $\qH' = \End_\qH(\qH\*_\qA P)$, then $\qH'\spe \qA'$ 
is also a \bac pair. Note that if $\qH$ is left hereditary (or piecewise hereditary), so is $\qH'$, hence $\qA'$ is a  \bac ring (\pbac\ ring) if so is $\qA$. 
In particular, the notion of  \bac (or \pbac) ring is Morita invariant. Note also that if $\qH$ is left hereditary and noetherian, it is also right hereditary, 
so $\qA\op$ is also a \bac ring (\pbac\ ring). 
\end{remk}

\begin{exams}\label{ex23} 
 \begin{enumerate}
 \item  An important example of \bac  algebras are \emph{nodal algebras} introduced in \cite{dz1,Zembyk}. By definition, they are finite dimensional  
 algebras such that there is a \bac  pair $\qH\spe\qA$, where $\qH$ is a hereditary algebra and $\mathrm{length}_\qA(\qH \*_\qA U)\le 2$ for every 
 simple $\qA$-module $U$. Their structure was completely described in \cite{Zembyk}.

 \item   Recall that a $\aK$-algebra $\qA$ is called \emph{gentle} \cite{AsSk} if $\qA \simeq \aK\Ga/J$, where $\Ga$ is a finite quiver (oriented graph) and $J$ 
 is an ideal in the path algebra $\aK\Ga$ such that $(J_+)^2 \spe J \spe (J_+)^k$ for some $k$, where $J_+$ is the ideal generated by all arrows, and the 
 following conditions hold:
 		\begin{enumerate}
 		\item   For every vertex $i \in \Ver \Ga$ there are at most 2 arrows starting at $i$ and at most 2 arrows ending at $i$.
 		\item   If an arrow $a$ starts at $i$ (ends at $i$) and arrows $b_1,b_2$ end at $i$ (respectively, start at $i$), then either $ab_1 = 0$ or $ab_2 = 0$ 
 		(respectively, either $b_1a = 0$ or $b_2a = 0$), but not both.
 		\item   The ideal $J$ is generated by products of arrows of the sort $ab$. 
 		\end{enumerate}
  It is proved in \cite{Zembyk2} that such algebras are nodal, hence \bac  algebras. The same is true for skewed-gentle algebras \cite{GdP} 
  obtained from gentle algebras by blowing up some vertices.
  
  \item   \emph{\bac  orders} are orders $\qA$ over a discrete valuation ring such that there is a \bac  pair $\qH \spe \qA$, where $\qH$ is a hereditary order. 
  They were considered in \cite{rinrog}. 
  
  \item    Let $\qH=\rT(n,\aK)$ be the ring of upper triangular $n\xx n$ matrices over a field $\aK$ and $\qA=\rU\rT(n,\aK)$ be its subring of unitriangular
  matrices $M$, i.e. such that all diagonal elements of $M$ are equal. Then $\qH$ is hereditary and $\rad\qH=\rad\qA$, hence $\qA$
  is a \bac algebra. In this case $\qC(\qH,\qA)=\rad\qA$.
  
  \item  $\La_n = \aK[x_1,x_2,...,x_n]/(x_1,x_2,...,x_n)^2$ embeds into $\qH = \prod_{i=1}^n \aK\Ga_i$, where $\Ga_i = \cc\xarr{a_i}\cc$ ($x_i$ 
  maps to $a_i$). Obviously, under this embedding $\rad\La_n = \rad\qH$, so $\La_n$ is a \bac algebra.
 \end{enumerate}
\end{exams}

We consider a fixed \bac  pair $\qH \spe \qA$, set $\gR = \rad\qA = \rad\qH$ and denote by $\qC$ the conductor $\qC(\qH,\qA)$. Obviously, $\qC$ 
is a two-sided $\qA$-ideal and the biggest left $\qH$-ideal contained in $\qA$. Actually, it even turns out to be a two-sided $\qH$-ideal and its definition is 
left-right symmetric.

\begin{lemma}\label{l24} 
  Let $R\sbe S$ be semisimple rings, $I=\{\al\in R\mid S\al\sbe R\}$. Then $I$ is a two-sided $S$-ideal.
\end{lemma}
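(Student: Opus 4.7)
My plan is to realize $I$ as $S\epsilon$ for a suitable central idempotent $\epsilon$ of $S$. As a first step I would verify that $I$ is a left $S$-submodule and a right $R$-submodule of $R$, hence a two-sided $R$-ideal: for $\alpha \in I$, $s \in S$, $r \in R$ one has $s\alpha \in S\alpha \sbe R$ with $S(s\alpha) = (Ss)\alpha \sbe S\alpha \sbe R$, and $\alpha r \in R$ with $S(\alpha r) = (S\alpha)r \sbe Rr \sbe R$. Since $R$ is semisimple, every two-sided ideal of $R$ is generated by a central idempotent, so $I = R\epsilon = \epsilon R$ for some central idempotent $\epsilon \in R$.

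Next I would translate the conductor relation into a commutation identity inside $S$. Because $\epsilon \in I$, the element $s\epsilon$ lies in $R$ for every $s \in S$, and applying the centrality of $\epsilon$ in $R$ to the pair $s\epsilon, \epsilon \in R$ gives
\[
 \epsilon s \epsilon \;=\; \epsilon(s\epsilon) \;=\; (s\epsilon)\epsilon \;=\; s\epsilon^2 \;=\; s\epsilon
\]
for every $s \in S$. Equivalently, $(1-\epsilon)S\epsilon = 0$, so $S\epsilon \sbe \epsilon S$.

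Finally I would invoke the Wedderburn decomposition $S = \prod_j S_j$ into simple Artinian factors and let $\epsilon_j$ denote the component of $\epsilon$ in $S_j$. The identity $(1-\epsilon)S\epsilon = 0$ projects to $(1_{S_j} - \epsilon_j)\, S_j\, \epsilon_j = 0$ in each factor. But in a simple ring $a S_j b = 0$ forces $a = 0$ or $b = 0$: otherwise $S_j a S_j = S_j$ would give $b = \sum x_i a y_i b$ with some $a y_i b \neq 0$, contradicting $a S_j b = 0$. Hence $\epsilon_j \in \{0, 1_{S_j}\}$ for every $j$, so $\epsilon$ is a central idempotent of $S$. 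Combining $S\epsilon = S\epsilon^2 \sbe R\epsilon = I$ with $I = R\epsilon \sbe S\epsilon$ yields $I = S\epsilon$, which is a two-sided $S$-ideal. The crux of the argument is this last step, lifting centrality of $\epsilon$ from $R$ to $S$; the conductor hypothesis supplies exactly the extra commutation relation, and the simplicity of each Wedderburn factor of $S$ then closes the argument.
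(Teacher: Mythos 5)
Your proof is correct and takes essentially the same route as the paper: both arguments produce a central idempotent $\epsilon$ of $R$ with $I=R\epsilon$, use the conductor condition $S\epsilon\subseteq R$ together with centrality of $\epsilon$ in $R$ to get $(1-\epsilon)S\epsilon=0$, and then upgrade $\epsilon$ to a central idempotent of $S$, giving $I=S\epsilon=\epsilon S$. The only (minor) divergence is the final step: the paper notes that $\epsilon S(1-\epsilon)$ is a left ideal of $S$ with zero square, hence zero by semisimplicity, whereas you pass to the Wedderburn factors and use that simple rings are prime — both are valid.
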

\begin{proof}
Obviously, $I$ is a left $S$-ideal and a two-sided $R$-ideal. As $R$ is semisimple, $I = Re$ for some central idempotent $e \in R$. Then $Se \sbe Re$, 
so $Se = Re=eR$ and $(1-e)Se = 0$. Hence $eS(1-e)$ is a left ideal in $S$ and $(eS(1-e))^2 =0$, so $eS(1-e)=0$ and $I=Se=eS$ is also 
a right $S$-ideal.
\end{proof}

\begin{prop}\label{p25} 
  $\qC$ is a two-sided $\qH$-ideal. It is the biggest $\qH$-ideal contained in $\qA$. Therefore, it coincides with the set 
  $\{\al \in \qA \mid \al\qH \sbe \qA\}$ or with $\ann_\qA(\qH/\qA)$  considered as left $\qA$-module.
\end{prop}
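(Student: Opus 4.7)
My plan is to deduce everything from Lemma 2.4 by passing to the semisimple quotients modulo the common radical $\gR = \rad\qH = \rad\qA$. The first step is the easy observation that $\gR \sbe \qC$, since $\gR$ is already a two-sided $\qH$-ideal contained in $\qA$ (we have $\qH\gR \sbe \gR \sbe \qA$). Hence $\qC$ descends to a well-defined subset of the semisimple ring $R := \qA/\gR$, which embeds into the semisimple ring $S := \qH/\gR$; semisimplicity on both sides is exactly what the definition of a \bac pair provides.

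The core step is to identify the image $\qC/\gR$ with $I := \{\bar\al \in R \mid S\bar\al \sbe R\}$. One inclusion is immediate from the definition of $\qC$; the other uses that $\gR \sbe \qA$, so the containment $\qH\al \sbe \qA + \gR$ is equivalent to $\qH\al \sbe \qA$. Lemma 2.4 then tells us that $I$ is a two-sided $S$-ideal and, via its proof, that $I$ coincides with the right analogue $\{\bar\al \in R \mid \bar\al S \sbe R\}$, both being equal to $eS = Se$ for a central idempotent $e \in R$. Lifting these equalities back through the two-sided $\qH$-ideal $\gR$ yields simultaneously that $\qC$ is a two-sided $\qH$-ideal and that $\qC = \{\al \in \qA \mid \al\qH \sbe \qA\}$.

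The remaining assertions then follow with no further work. The right-sided description identifies $\qC$ with $\ann_\qA(\qH/\qA)$ viewed as a left $\qA$-module, since $\al\cdot\bar h = 0$ in $\qH/\qA$ means precisely $\al h \in \qA$ for all $h \in \qH$. Maximality falls out of the defining property: for any $\qH$-ideal (one-sided or two-sided) $J \sbe \qA$ we have $\qH J \sbe J \sbe \qA$, giving $J \sbe \qC$.

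I do not anticipate any real obstacle. The only point that merits attention is keeping the quotient argument tidy: one must verify that the semisimplicity of both $R$ and $S$ makes Lemma 2.4 directly applicable, and that the left-right symmetry of $I$ inside $S$ transfers cleanly back to the symmetry of $\qC$ inside $\qH$ via the two-sided ideal $\gR$.
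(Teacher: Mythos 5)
Your proposal is correct and follows essentially the same route as the paper: the paper's one-line proof is precisely to apply Lemma~\ref{l24} to the semisimple quotients $\qA/\rad\qA\sbe\qH/\rad\qH$, and your write-up just makes explicit the (routine) passage to and from these quotients, the identification of $\qC/\rad\qA$ with the set $I$ of the lemma, and the left-right symmetry $I=Se=eS$. The only point to state carefully is that for a \emph{right} $\qH$-ideal $J\sbe\qA$ the containment $\qH J\sbe J$ need not hold, so maximality for one-sided ideals on that side should instead use the right-hand description $\{\al\in\qA\mid\al\qH\sbe\qA\}$; for the two-sided maximality claimed in the proposition your argument is fine.
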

\begin{proof}
 It follows from the preceding lemma applied to the rings $\qA/ \rad \qA$ and $\qH/\rad\qH$. 
\end{proof}
 In what follows we assume that $\qA \ne \qH$, so $\qC\ne \qA$. To calculate $\qC$, we consider a decomposition $\qA = \bop_{i=1}^m  A_i$, 
 where $ A_i$ are indecomposable projective left $\qA$-modules. Arrange them so that $\qH A_i \ne  A_i$ for $1\le i\le r$ and $\qH A_i = A_i$ 
 for $r<i\le m$, and set $A^0 =\bop_{i=1}^rA_i,\,H^0 =\qH A^0$ and $A^1 =\bop_{i=r+1}^mA_i =\qH A^1$. Then $\qA=A^0\+A^1$ and
 $\qH=H^0\+A^1$ (possibly, $r=m$, so $A^0 =\qA$ and $H^0 =\qH$). Let $A^0 =\qA e_0$ and $A^1 =\qA e_1$, where $e_0$ and $e_1$ are 
 orthogonal idempotents and $e_0 + e_1 = 1$. Set $A^a_b = e_b\qA e_a$ and $H_b^a = e_b\qH e_a$, where $a,b \in \{0,1\}$. 
 Note that $A^1_b = H_b^1$ and $A^0_1 = H_1^0$. As $A^0$ and $A^1$ have no isomorphic direct summands, $A^a_b \sbe \rad \qA$ if 
 $a \ne b$. Hence, if we set $\gR^a = \rad A^a\ (a = 0, 1)$ and consider the Pierce decomposition of the ring $\qA$:
 
 \begin{align*}
  \qA&=\mtr{ A^0_0 & A^1_0 \\ A^0_1 & A^1_1}, \\
  \intertext{the Pierce decomposition of the ideal $\gR$ becomes}
  \gR&=\mtr{ \gR^0_0 & A^1_0 \\ A^0_1 & \gR^1_1},
 \end{align*}
 where $\gR^a_a=\rad A^a_a\ (a=0,1)$.
 It implies, in particular, that $H^0$ and $H^1$ have no isomorphic direct summands, the Pierce decomposition of $\qH$ is
 
 \[
   \qH =  \mtr{ H^0_0 & A^1_0 \\ A^0_1 & A^1_1} 
 \]
 and $\gR^0_0=\rad H^0_0$.
 Now one easily sees that an element $a =\smtr{\al&\be\\\ga&\de}$ belongs to $\qC$ if and only if $H^0\al \sbe A^0$. We claim that then 
 $H^0\al \sbe \rad A^0$. Otherwise $H^0\al$ contains an idempotent, hence a direct summand of $A^0$, which is isomorphic to some $A_i$ 
 with $1\le i \le r$. It is impossible, since $\qH A_i \ne A_i$. Therefore, $\al \in \gR^0_0$ and we obtain the following result.

 \begin{prop}\label{p26} 
  The Pierce decomposition of the ideal $\qC$ is
  \[
      \qC= \mtr{ \gR^0_0 & A^1_0 \\ A^0_1 & A^1_1}.
  \]
 \end{prop}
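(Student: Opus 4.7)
The calculation $\qC = \smtr{\gR^0_0 & A^1_0 \\ A^0_1 & A^1_1}$ splits into two steps. First, I would directly expand the Pierce matrix product $\qH a$ for $a = \smtr{\alpha & \beta \\ \gamma & \delta}$. The seven ``auxiliary'' products all automatically land in $\qA$: for instance $A^1_0\gamma \sbe e_0\qA e_1\cdot e_1\qA e_0 \sbe A^0_0$ and $H^0_0\beta \sbe e_0\qH\qA e_1 = e_0\qH e_1 = A^1_0$, while the remaining five are even simpler via $e_0 e_1 = 0$ or $A^1 \sbe \qA$. Hence $\qH a \sbe \qA$ reduces to the single condition $H^0\alpha \sbe A^0$, which immediately identifies $e_0\qC e_1 = A^1_0$, $e_1\qC e_0 = A^0_1$, $e_1\qC e_1 = A^1_1$, and leaves only the equality $e_0\qC e_0 = \gR^0_0$ to establish.

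The inclusion $\gR^0_0 \sbe e_0\qC e_0$ is clear since $\gR = \rad\qH \sbe \qA$ gives $\gR \sbe \qC$. For the reverse, I would argue by contradiction: suppose $\alpha \in \qC \cap A^0_0$ with $\alpha \notin \gR^0_0$. By Proposition~2.5, $\qC$ is a two-sided $\qH$-ideal, hence $e_0\qC e_0$ is a two-sided ideal of $A^0_0$. In the semisimple quotient $A^0_0/\gR^0_0 = e_0(\qA/\gR)e_0$, its image is a nonzero two-sided ideal (containing $\bar\alpha$), and any such ideal in a semisimple artinian ring is a direct product of matrix-algebra factors, so it contains a primitive idempotent $\bar f$. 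Using the semiperfect structure of $A^0_0$, lift $\bar f$ to an idempotent $f \in A^0_0$. Then $f \in \qC$: indeed $f$ differs from a chosen representative in $e_0\qC e_0$ by an element of $\gR^0_0 \sbe \qC$. Moreover $f$ is primitive in $A^0_0$ (since $\bar f$ is and idempotent lifting preserves primitivity), and then primitive in $\qA$ itself, since any orthogonal decomposition $f = f_1 + f_2$ in $\qA$ has $f_i = f f_i f \in f\qA f \sbe A^0_0$ and thus reduces to a decomposition inside $A^0_0$.

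Since $f$ is now a primitive idempotent of $\qA$ lying in $A^0 = \qA e_0$, the summand $\qA f$ of $A^0$ is isomorphic to some $A_i$ with $1 \le i \le r$. On the other hand, $f \in \qC$ gives $\qH f \sbe \qA$, and then for every $hf \in \qH f$ the identity $hf = (hf)\cdot f$ places $hf$ in $\qA\cdot f = \qA f$; hence $\qH f \sbe \qA f$. Combined with the trivial $\qA f \sbe \qH f$, this reads $\qH A_i = \qH f = \qA f = A_i$, contradicting the defining property $\qH A_i \ne A_i$ for $1 \le i \le r$. The step I expect to be the main obstacle is exactly the passage from ``a non-radical element of $\qC \cap A^0_0$'' to ``an idempotent in $\qC$ corresponding to an indecomposable summand of $A^0$'': this is precisely where Proposition~2.5 is indispensable, since it is two-sidedness of $\qC$ in $\qH$ that turns $e_0\qC e_0$ into a two-sided ideal of $A^0_0$, so that its semisimple image even contains primitive idempotents, and it is the inclusion $\gR \sbe \qC$ that allows the lift to stay inside $\qC$.
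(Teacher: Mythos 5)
Your argument is correct and is essentially the paper's own proof: both reduce membership in $\qC$ to the corner condition $H^0\al\sbe A^0$ and then rule out a non-radical $\al$ by producing an idempotent $f$ whose summand $\qA f$ of $A^0$ is isomorphic to some $A_i$ with $i\le r$ and satisfies $\qH f=\qA f$, contradicting $\qH A_i\ne A_i$; the paper just gets the idempotent directly from the left $\qH$-ideal $H^0\al$ (a left ideal of a semiperfect ring not contained in the radical contains a nonzero idempotent), whereas you lift one in the corner ring $A^0_0$ and transfer primitivity, which is a fuller but equivalent bookkeeping of the same idea. The one inaccurate remark is that Proposition~2.5 is indispensable here: two-sidedness of $\qC$ as an $\qA$-ideal, which is immediate from the definition, already makes $e_0\qC e_0$ a two-sided ideal of $A^0_0$, and indeed the paper's proof of this proposition does not invoke Proposition~2.5.
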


 \begin{defin}\label{d27} 
   Analogously to \cite{BD-tilt}, we define the \emph{Auslander envelope} of the \bac pair $\qH\spe\qA$ as the ring $\tA$ of $2\xx2$ matrices of the form
   
 \[
  \tA=\mtr{\qA & \qH \\ \qC & \qH} 
 \]
 with the usual matrix multiplication.
 \end{defin}

 Using Pierce decompositions of $\qA,\qH$ and $\qC$, we also present $\tA$ as the ring of $4 \xx 4$ matrices
 
 \begin{equation}\label{e21} 
  \tA=\mtr{ A^0_0 & A^1_0 & H^0_0 & A^1_0 \\ A^0_1 & A^1_1 & A^0_1 & A^1_1 \\ 
  			\gR^0_0 & A^1_0 & H^0_0 & A^1_0 \\ A^0_1 & A^1_1 & A^0_1 & A^1_1 }
 \end{equation}
 
 We also define $\tH$ as the ring of $4\xx 4$ matrices of the form
 
 \begin{align*}
   \tH&= \mtr{\qH & \qH \\ \qC & \qH} 
     \intertext{or}	 
  \tH&= \mtr{ H^0_0 & A^1_0 & H^0_0 & A^1_0 \\ A^0_1 & A^1_1 & A^0_1 & A^1_ 1 \\
  				 \gR^0_0 & A^1_0 & H^0_0 & A^1_0 \\ A^0_1 & A^1_1 & A^0_1 & A^1_1 } \\ 			 
 \end{align*}
 Obviously, $\rad\tH= \rad\tA$, so $\tH\spe\tA$ is also a \bac pair. $\tA$ is left noetherian \iff $\qA$ is left noetherian and $\qH$ is finitely generated 
 as left  $\qA$-module.
 
In the noetherian case one can calculate the global dimensions of $\tA$ and $\tH$. It turns out that it only depends on $\qH$.
 
\begin{theorem}\label{t28} 
 Suppose that either $\qA$ (hence also $\qH$) is left perfect or $\qA$ is left noetherian and $\qH$ is finitely generated as left $\qA$-module
 (hence also left noetherian). 
 \begin{align*}
  \gdim\tA&= 1 + \max(1 + \pdim_\qH \gR^0, \pdim_\qH \gR^1)=\\
  			&=
  			\begin{cases}
  			 1 + \gdim \qH &\text{\rm if } \pdim_\qH \gR^0 \ge \pdim_\qH\gR^1,\\
  			\gdim \qH &\text{\rm if } \pdim_\qH \gR^0 < \pdim_\qH \gR^1
  			\end{cases}\\
  		\intertext{and}
  		   & \gdim \tH = \gdim \qH,
 \end{align*}
 where we set $\pdim0=-1$.	   
 In particular, if $\qA$ is a \bac ring, so is also $\tA$, and if $\qA$ is not left hereditary, then $\gdim \tA = 2$.%
 \footnote{\,Note that if $\tA$ is left hereditary, so is $\qA=e'\tA e'$ \cite{San}.)}
\end{theorem}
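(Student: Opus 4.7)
My plan is to compute $\gdim\tA$ and $\gdim\tH$ by evaluating the projective dimensions of all simple left modules and taking the supremum, which is valid because $\tA$ and $\tH$ are semiperfect under the stated hypotheses. First I would catalogue the indecomposable projective left $\tA$-modules via the Peirce presentation \eqref{e21}: up to isomorphism they come in three families, indexed by primitive idempotents, of shapes $\mtr{A^0_\al\\\gR^0_\al}$, $\mtr{A^1_\be\\A^1_\be}$, and $\mtr{H^0_\ga\\H^0_\ga}$, coming from primitive idempotents of $A^0\sbe\qA$, of $A^1$ (with the upper- and lower-slot idempotents of $A^1$ producing isomorphic objects), and of $H^0\sbe\qH$ respectively. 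The projectives of $\tH$ differ only in the first family, which becomes $\mtr{H^0_\al\\\gR^0_\al}$. I write $S_{A^0,\al}$, $S_{A^1,\be}$, $S_{H^0,\ga}$ for the corresponding simple tops.

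The technical core is the projective-dimension computation of two auxiliary modules. For $a\in\{0,1\}$, the ``diagonal'' $\tA$-module $\mtr{\gR^a\\\gR^a}$ is resolved by lifting a minimal $\qH$-projective resolution $\ldots\to P^{(1)}\to P^{(0)}\to\gR^a\to 0$ diagonally: each $\mtr{P^{(i)}\\P^{(i)}}$ is $\tA$-projective, because summands of $P^{(i)}$ in $H^0$ and in $A^1$ produce exactly the projectives of types $\mtr{H^0\\H^0}$ and $\mtr{A^1\\A^1}$. Hence $\pdim_\tA\mtr{\gR^a\\\gR^a}=p_a:=\pdim_\qH\gR^a$, and since $\Om_\tA S_{A^a,-}$ is (a summand of) this diagonal module, $\pdim_\tA S_{A^a,-}=1+p_a$. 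For the $H^0$-family I use the short exact sequence of $\tA$-modules
\[
0\to\mtr{A^0\\\gR^0}\to\mtr{H^0\\\gR^0}\to\mtr{H^0/A^0\\0}\to 0,
\]
whose cokernel is a semisimple $\tA$-module — because $\qC$ annihilates $H^0/A^0$ by Proposition~\ref{p25} — hence a direct sum of copies of the $S_{A^0,\al}$'s, with projective dimension $1+p_0$. A dimension-shift argument through this sequence (the left term being $\tA$-projective) gives $\pdim_\tA\mtr{H^0\\\gR^0}=1+p_0$, and therefore $\pdim_\tA S_{H^0,\ga}=2+p_0$.

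Taking the supremum yields $\gdim\tA=\max(1+p_0,\,1+p_1,\,2+p_0)=1+\max(1+p_0,\,p_1)$, which the case split on $p_0\ge p_1$ rewrites as stated. For $\tH$ the analogous ``extra step'' disappears because $\mtr{H^0\\\gR^0}$ is already $\tH$-projective (literally a column of $\tH$), so $\gdim\tH=1+\max(p_0,p_1)=\gdim\qH$. The \bac consequence is then immediate: if $\qA$ is \bac then $\qH$ is hereditary, so $\gdim\tH=1$ makes $\tH$ hereditary and $\tH\sp\tA$ is a \bac pair witnessing $\tA$'s \bac-ness, with $\gdim\tA\le 2$ by the formula; and if $\qA$ is not itself hereditary, $A^0\ne 0$ forces $p_0\ge 0$ so the upper bound is attained. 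The main obstacle is the dimension-shift for $\mtr{H^0\\\gR^0}$ above: one must confirm the short exact sequence does not split in the non-hereditary case — equivalently, that the gap $H^0/A^0$ is genuinely nonzero whenever $A^0$ contains an indecomposable summand $A_i$ with $\qH A_i\ne A_i$, which is guaranteed by the definition of $A^0$ — and that at least one primitive $H^0$-idempotent produces a summand of $\mtr{H^0\\\gR^0}$ realizing the upper bound $2+p_0$, which follows from additivity of projective dimension over direct sums.
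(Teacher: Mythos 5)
Your overall strategy coincides with the paper's: reduce $\gdim\tA$ to the projective dimensions of the simple modules (equivalently of $\rad\tA$), handle the first two families via the diagonal functor $N\mapsto N^+=\smtr{N\\N}$, which preserves minimal projective resolutions, treat the third family through a short exact sequence whose cokernel is semisimple and concentrated in the $\qA$-slot, and obtain $\gdim\tH=\gdim\qH$ from the fact that $\smtr{H^0\\\gR^0}$ is literally a column of $\tH$. The only real deviation is your choice of sequence $0\to\smtr{A^0\\\gR^0}\to\smtr{H^0\\\gR^0}\to\smtr{H^0/A^0\\0}\to0$, with projective kernel, in place of the paper's sequence \eqref{e23}, $0\to(\gR^0)^+\to\smtr{H^0\\\gR^0}\to\smtr{H^0/\gR^0\\0}\to0$.

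Exactly at that step there is a genuine gap: the claim $\pdim_\tA\smtr{H^0/A^0\\0}=1+p_0$. What you actually establish is only that $H^0/A^0$ is a nonzero semisimple module whose constituents lie among the simples $S_{A^0,\al}$; but $\pdim_\tA S_{A^0,\al}=1+\pdim_\qH\gR e_\al$ varies with $\al$, and nothing in your argument shows that a constituent with $\pdim_\qH\gR e_\al=p_0$ (the maximum) actually occurs in $H^0/A^0$. ``The gap is nonzero'' only gives $H_i/A_i\ne0$, and its simple summands need not include $S_{A^0,i}$ itself, let alone a maximal one; ``additivity over direct sums'' is irrelevant, since the issue is which summands are present. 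Without this you only get $\pdim\smtr{H^0\\\gR^0}\le1+p_0$, which in the decisive case $p_1\le p_0$ does not pin down $\gdim\tA$. This is what the paper's cokernel buys: $H^0/\gR^0$ contains $A^0/\gR^0$ as a direct summand, hence \emph{every} family-one simple, so its projective dimension is exactly $1+p_0$ with no further work. The fact you need is in fact true (every $S_{A^0,\al}$ with $\al\le r$ occurs in $H^0/A^0$; for algebras one can see it by a multiplicity count in the semisimple pair $\qA/\gR\sbe\qH/\gR$, using that $f\bar\qH g$ and $g\bar\qH f$ have equal $\aK$-dimension and that $\qH f_j=\qA f_j$ for $j$ in the $A^1$ part), but it requires an argument your write-up does not contain. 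Secondly, the edge case you yourself flag is left open, and it is precisely the case of the ``in particular'' statement: when $\qH$ is hereditary, $1+p_0\le1$, your dimension shift through a projective kernel only yields $\pdim\smtr{H^0\\\gR^0}\le1$, and to get $\gdim\tA=2$ you must show $\smtr{H^0\\\gR^0}$ is not projective; non-splitness of your sequence is neither proved nor the right criterion (a projective module can map onto a semisimple one without the sequence splitting), so something like a Schanuel/Krull--Schmidt comparison of tops, or the footnote's appeal to \cite{San}, is still needed there.
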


\noindent 
For instance, it is the case for nodal (in particular, gentle or skewed-gentle) algebras (Example~\ref{ex23}).
 
\begin{proof}
 Under these conditions $\tA$ and $\tH$ are either left perfect or left noetherian.
 We recall that if a ring $\La$ is left perfect or left noetherian and semiperfect, then $\gdim\La = \pdim_\La(\La/ \rad \La) = 1 + \pdim_\La \rad \La$. 
 The $4\xx 4$ matrix presentation \eqref{e21} of $\tA$ implies that the corresponding presentation of $\rad \tA$ is
 
 \begin{equation}\label{e22} 
  \rad\tA=\mtr{ \gR^0_0 & A^1_0 & H^0_0 & A^1_0 \\ A^0_1 & \gR^1_1 & A^0_1 & \gR^1_1 \\ 
  			\gR^0_0 & A^1_0 & \gR^0_0 & A^1_0 \\ A^0_1 & \gR^1_1 & A^0_1 & \gR^1_1 }
 \end{equation}
 An $\tA$-module $M$ is given by a quadruple $(M',M'',\phi,\psi)$, where $M'$ is an $\qA$-module, $M''$ is an $\qH$-module,
 $\psi : M''\to M'$ is a homomorphism of $\qA$-modules and $\phi: \qC\*_\qA M'\to M''$ is a homomorphism of $\qH$-modules. 
 Namely, $M'=e'M,\,M''=e''M$, where $e'=\smtr{1&0\\0&0},\,e''=\smtr{0&0\\0&1}$, $\psi(m'')=\smtr{0&1\\0&0}m''$
 and $\phi(c\*m')=\smtr{0&0\\c&0}m'$. 
 We usually write $M = \smtr{ M' \\ M''}$ not mentioning $\phi$ and $\psi$. For an $\qH$-module $N$ we define the $\tA$-module 
 $N^+ = \smtr{N\\N}$. Then $N\mapsto N^+$ is an exact functor mapping projective modules to projective ones, since 
 $\qH^+=\smtr{\qH\\\qH}$ is a projective $\tA$-module.
 
 We denote by $L^i$ and by $R^i$, respectively, the i-th column of the presentations \eqref{e21} and \eqref{e22}. 
 Then $R^1 = (\gR^0)^+$ and $R^2 = R^4 = (\gR^1)^+$, where $\gR^a = \gR e_a$. If
 
  \[
  \cdots\to F_k \to\cdots\to F_1\to F_0 \to N\to0
  \]
 is a minimal projective resolution of an $\qH$-module $N$, 
 
 \[
   \cdots\to F^+_k \to\cdots\to F^+_1\to F^+_0 \to N^+\to0
 \]
is a minimal projective resolution of $N^+$, so $\pdim_{\tA} N^+ = \pdim_\qH N$. In particular, 
$\pdim_{\tA} R^1 = \pdim_\qH \gR^0$ and $\pdim_{\tA} R^2 = \pdim_\qH\gR^1.$
For the module $R^3$ we have an exact sequence

\begin{equation}\label{e23} 
 0\to (\gR^0)^+ \to R^3 \to \smtr{H^0/\gR^0 \\ 0} \to 0.
\end{equation}
 Note that $H^0/\gR^0$ is a semisimple $\qA$-module  and $e_1(H^0/\gR^0) = 0$, hence it contains the same simple direct summands 
 as $A^0/\gR^0$. The same is true for $\smtr{H^0/\gR^0 \\ 0}$ and $\smtr{A^0/\gR^0 \\ 0}=L^1/R^1$. Hence
 \[
   \pdim_{\tA}\smtr{H^0/\gR^0 \\ 0}=1+\pdim_{\tA} R^1 =1+\pdim_\qH \gR^0.
 \]
Therefore, the exact sequence \eqref{e23} shows that $\pdim_{\tA}R^3 = 1+\pdim_\qH \gR^0$ and
\[
\pdim_{\tA} \rad\tA = \max(1 + \pdim_\qH \gR^0, \pdim_\qH \gR^1),
\]
which gives the necessary result for $\tA$.
On the other hand, $R^3$ is a projective $\tH$-module, whence $\gdim\tH =\gdim\qH$. 
 \end{proof}

\section{The structure of derived categories}
\label{sec3} 

In what follows we denote by $\cD(\qA)$ the derived category $\cD(\qA\Md)$. We denote by $\cD_f (\qA)$ the full subcategory of $\cD (\qA)$ 
consisting of complexes quasi-isomorphic to complexes of finitely generated projective modules. If $\qA$ is left noetherian, it coincides with the 
derived category of the category $\qA\md$ of finitely generated $\qA$-modules. We also use the usual superscripts $+,-,b$. By $\Per(\qA)$ 
we denote the full subcategory of perfect complexes from $\cD(\qA)$, i.e. complexes quasi-isomorphic to finite complexes of finitely generated 
projective modules. It coincides with the full subcategory of compact objects in $\cD(\qA)$ \cite{Rouq}. If $\qA$ is left noetherian, an $\qA$-module 
$M$ belongs to $\Per(A)$ if and only if it is finitely generated and of finite projective dimension.

There are close relations between the categories $\cD(\qA),\cD(\qH)$ and $\cD(\tA)$ based on the following construction \cite{BDG}.

Let $\qP =\smtr{\qA\\\qC}$. It is a projective $\tA$-module and $\End\qP\simeq \qA\op$, so it can be considered as a right $\qA$-module. 
Consider the functors

\begin{align*}
 & \sF=\Hom_{\tA}(\qP,- )\simeq\qP\ch\*_{\tA}- :\tA\Md\to\qA\Md,\\
 & \sF^*=\qP\*_\qA- :\qA\Md\to\tA\Md,\\
 & \sF^! = \Hom_\qA(\qP\ch,- ) : \qA\Md\to \tA\Md,
\end{align*}  
 where $\qP\ch =\Hom_{\tA}(P,\tA)\simeq(\qA\ \qH)$ is the dual right projective $\tA$-module. The functor $\sF$ is exact, $\sF^*$ is its left adjoint and 
 $\sF^!$ is its right adjoint. Moreover, the adjunction morphisms $\sF\sF^* \to \Id_{\qA\Md}$ and $\Id_{\qA\Md} \to \sF\sF^!$ are
isomorphisms \cite[Thm.\,4.3]{BDG}. The functors $\sF^*$ and $\sF^!$ are fully faithful and $\sF$ is essentially surjective, i.e. 
every $\qA$-module is isomorphic to $\sF M$ for some $\tA$-module $M$. $\Ker \sF$ is a Serre subcategory of $\tA\Md$ equivalent to $\oH\Md$,
where $\oH =\qH/\qC \simeq \tA/\smtr{\qA &\qH\\ \qC&\qC}$. The embedding functor $\sI : \Ker\sF\to \tA\Md$ 
has a left adjoint $\sI^*$ and a right adjoint $\sI^!$ and we obtain a recollement diagram
\[
     \xymatrix{ \Ker\sF \ar[rr]|{\,\sI\,} && \tA\Md \ar@/_1em/[ll]|{\,\sI^*\,}  \ar@/^1em/[ll]|{\,\sI^!\,}  \ar[rr]|{\,\sF\,}  && 
  	\qA\Md  \ar@/_1em/[ll]|{\,\sF^*\,}  \ar@/^1em/[ll]|{\,\sF^!\,} }
\]
As the functor $\sF$ is exact, it extends to the functor between the derived categories $\DF : \cD(\tA)\to \cD(\qA)$ acting on complexes componentwise. 
The derived functors $\LF^*$ and $\RF^!$ are, respectively, its left and right adjoints, the adjunction morphisms 
$\Id_{\cD(\qA)} \to\DF\cc\LF^*$ and $\DF\cc\LF^* \to\Id_{\cD(\qA)}$ are again isomorphisms and we have a recollement diagram
\[
     \xymatrix{ \Ker\DF \ar[rr]|{\,\DI\,} && \cD(\tA) \ar@/_1em/[ll]|{\,\LI^*\,}  \ar@/^1em/[ll]|{\,\RI^!\,}  \ar[rr]|{\,\DF\,}  && 
  	\cD(\qA)  \ar@/_1em/[ll]|{\,\LF^*\,}  \ar@/^1em/[ll]|{\,\RF^!\,} }
\]
 (It also follows from Corollary~\ref{c12}.) Here $\Ker \DF = \cD_{\oH} (\tA)$, the full subcategory of complexes whose cohomologies are 
 $\oH$-modules, i.e. are annihilated by the ideal $\smtr{\qA& \qH \\ \qC&\qC}$. Note that, as a rule, it is not equivalent to $\cD(\oH)$. 
 From the definition of $\sF$ it follows that 
 \[
   \Ker\DF = \qP^\perp = \{C \in \cD(\tA) \mid \Hom_{\cD(\tA)}(\qP,C[k]) = 0\, \text{ for all }\, k\}.
 \]
Obviously, $\DF$ maps $\cD^\si(\tA)$ to $D^\si(\qA)$ for $\si \in \{+,-,b\}$, $\LF^*$ maps $\cD^-(\qA)$ to $\cD^-(\tA)$ and $\RF^!$ maps 
 $\cD^+(\qA)$ to $\cD^+(\tA)$. If $\tA$ is left noetherian, $\DF$ maps $\cD_f(\tA)$ to $\cD_f(\qA$) and $\LF^*$ maps $\cD_f(\qA)$ to $\cD_f(\tA)$. 
 Finally, both $\DF$ and $\LF^*$ have right adjoints, hence map compact objects (i.e. perfect complexes) to compact ones. On the contrary, 
 usually $\LF^*$ does not map $\cD^b(\qA)$ to $\cD^b (\tA)$. For instance, it is definitely so if $\gdim \tA < \8$ while $\gdim\qA =\8$ as in 
 Example~\ref{ex23}\,(4,5). If $\gdim\qH$ is finite, so is $\gdim\tA$, thus this recollement can be considered as a sort of categorical resolution
 of the category $\cD(\qA)$. In any case, it is useful for studying the categories $\qA\Md$ and $\cD(\qA)$ if we know the structure of the categories 
 $\tA\Md$ and $\cD(\tA)$. For instance, it is so if we are interesting in the \emph{derived dimension}, i.e. the dimension of the category $\cD_f^b(\qA)$ 
 in the sence of Rouquier \cite{Rouq}. We recall the definition.
 
 \begin{defin}\label{d31} 
  Let $\cT$ be a triangular category and $\dM$ be a set of objects from $\cT$.
  \begin{enumerate}
  \item We denote by $\<\dM\>$ the smallest full subcategory of $\cT$ containing $\dM$ and closed under direct sums, direct summands and shifts 
  (not closed under cones, so not a triangulated subcategory).
  
  \item  If $\dN$ is another subset of $\cT$, we denote by $\dM\dagg\dN$ the set of objects $C$ from $\cT$ such that there is an exact triangle 
  $A \to B \to C \pto$, where $A \in\dM,\, B \in\dN$.
  
  \item  We define $\<\dM\>_k$ recursively, setting $\<\dM\>_1 = \<\dM\>$ and $\<\dM\>_{k+1} =\<\<\dM\>\dagg\<\dM\>_k\>$.
  
  \item The \emph{dimension} $\dim\cT$ of $\cT$ is the smallest $k$ such that there is a finite set of objects $\dM$ such that $\<\dM\>_{k+1} = \cT$ (if it exists). 
  We call the dimension $\dim\cD_f^b(\qA)$ the \emph{derived dimension} of the ring $\qA$ and denote it by $\ddim \qA$.
  \end{enumerate}
 \end{defin}

As the functor $\sF$ is exact and essentially surjective, the next result is evident.
\begin{prop}\label{p32} 
 $\ddim\qA\le \ddim\tA$. Namely, if $\cD_f^b(\tA) = \<\dM\>_{k+1}$, then $\cD_f^b(\qA) = \<\DF(\dM)\>_{k+1}$.
 
\end{prop}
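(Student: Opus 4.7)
The plan is to prove the sharper second statement: for any $\dM \sbe \cD_f^b(\tA)$ and any $k \ge 0$, if $\cD_f^b(\tA) = \<\dM\>_{k+1}$ then $\cD_f^b(\qA) = \<\DF(\dM)\>_{k+1}$; the inequality $\ddim \qA \le \ddim \tA$ follows by specializing to a finite $\dM$ witnessing $\ddim\tA$. The argument has two independent ingredients: \textbf{(A)} the filtration-preservation $\DF(\<\dM\>_j) \sbe \<\DF(\dM)\>_j$ for every $j \ge 1$, and \textbf{(B)} every $C \in \cD_f^b(\qA)$ is isomorphic to $\DF M$ for some $M \in \cD_f^b(\tA)$.

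For (A) I would induct on $j$. The base case $j = 1$ holds because $\DF$, being a triangulated functor that admits a right adjoint, commutes with coproducts, shifts, and direct summands. For the inductive step, any object of $\<\dM\>_{j+1} = \<\<\dM\>\dagg\<\dM\>_j\>$ is, up to summand, the third vertex of a triangle $A \to B \to C \pto$ with $A \in \<\dM\>$ and $B \in \<\dM\>_j$; applying the triangulated functor $\DF$ yields $\DF A \to \DF B \to \DF C \pto$ with $\DF A \in \<\DF(\dM)\>$ and $\DF B \in \<\DF(\dM)\>_j$ (by the inductive hypothesis), so $\DF C \in \<\DF(\dM)\>_{j+1}$.

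For (B), the decisive input is that $\sF$ is exact, so $\DF$ is computed by applying $\sF$ termwise to any complex of modules. Given $C \in \cD_f^b(\qA)$, represent it by a bounded complex $C^\bullet$ of finitely generated $\qA$-modules and set $M^\bullet := \sF^* C^\bullet$ (applied componentwise). Each $M^i = \qP \*_\qA C^i$ is finitely generated over $\tA$, since $C^i$ is a quotient of some $\qA^n$ and correspondingly $M^i$ is a quotient of $\qP^n$, with $\qP$ itself a cyclic $\tA$-module; hence $M^\bullet \in \cD_f^b(\tA)$. Termwise application of $\sF$ together with the natural isomorphism $\sF\sF^* \simeq \Id_{\qA\Md}$ then give $\DF M^\bullet = \sF M^\bullet \simeq C^\bullet$ componentwise. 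Combining with (A) and $\cD_f^b(\tA) = \<\dM\>_{k+1}$ places $C \simeq \DF M$ in $\<\DF(\dM)\>_{k+1}$, as desired.

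The only mildly subtle point, and the one place one might be tempted to overcomplicate, is that $\sF^*$ is not assumed exact, so $M^\bullet$ in general does not represent $\LF^* C$. This is immaterial: the argument requires only that $\DF M^\bullet \simeq C$, and this follows purely from the underived identity $\sF\sF^* = \Id$, without ever invoking a left derived functor. This is precisely why, as the paper indicates, exactness and essential surjectivity of the single functor $\sF$ suffice.
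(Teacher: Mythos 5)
Your proposal is correct and is essentially the paper's own argument: the paper declares the statement evident from the exactness and essential surjectivity of $\sF$, and your parts (A) and (B) are precisely the details behind that remark, with the componentwise application of $\sF^*$ plus the isomorphism $\sF\sF^*\simeq\Id_{\qA\Md}$ realizing essential surjectivity at the level of bounded complexes (and correctly noting that no left derived functor $\LF^*$ is needed). The only implicit ingredient, shared with the paper, is that objects of $\cD_f^b$ are represented by bounded complexes of finitely generated modules and that $\DF$ lands in $\cD_f^b(\qA)$, which the paper itself has already justified in the (left noetherian) setting it works in.
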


\section{Semi-orthogonal decomposition}
\label{sec4} 

There is another recollement diagram for $\cD(\tA)$ related to the projective module $\qQ =\smtr{\qH\\\qH}$ with $\End \qQ\simeq \qH\op$. Namely, we set

\begin{align*}
 & \sG=\Hom_{\tA}(\qQ,- )\simeq \qQ\ch\*_{\tA}- :\tA\Md\to\qH\Md,\\
 & \sG^*=\qQ\*_\qH- :\qH\Md\to\tA\Md,\\
 & \sG^! = \Hom_\qH(\qQ\ch,- ) : \qH\Md\to\tA\Md,\\
  \intertext{ where  $\qQ\ch = \Hom_{\tA} (\qQ,\tA) \simeq (\qC\ \qH)$,}
  & \DG : \cD(\tA) \to \cD(\qH)\, \text{ is $\sG$ applied componentwise,}\\
  & \LG^* : \cD(A) \to \cD(\tA)\, \text{ is the left adjoint of }\, \DG,\\
  & \RG^! : \cD(A)\to \cD(\tA)\, \text{ is the right adjoint of }\, \DG.
\end{align*} 
 We also set $\oA  = \qA/\qC \simeq \tA / \smtr{\qC &\qH \\\qC &\qH}$. Then we have recollement diagrams
 \begin{align*}
  & \xymatrix{ \Ker\sG \ar[rr]|{\,\sJ\,} && \tA\Md \ar@/_1em/[ll]|{\,\sJ^*\,}  \ar@/^1em/[ll]|{\,\sJ^!\,}  \ar[rr]|{\,\sG\,}  && 
  	\qH\Md  \ar@/_1em/[ll]|{\,\sG^*\,}  \ar@/^1em/[ll]|{\,\sG^!\,} }\\
  	\intertext{and}
  &   \xymatrix{ \Ker\DG \ar[rr]|{\,\DJ\,} && \cD(\tA) \ar@/_1em/[ll]|{\,\LJ^*\,}  \ar@/^1em/[ll]|{\,\RJ^!\,}  \ar[rr]|{\,\DG\,}  && 
  	\cD(\qH)  \ar@/_1em/[ll]|{\,\LG^*\,}  \ar@/^1em/[ll]|{\,\RG^!\,}, }
 \end{align*}
 where $\Ker\sG\simeq\oA\Md$. Since the $\tA$-ideal $(\qC\ \qH)$ is projective as right $\tA$-module, \cite[Thm.\,4.6]{BDG} 
implies that $\Ker \DG \simeq \cD (\oA )$.

As usually, this recollement diagram gives \sor s \cite[Cor.\,2.6]{BDG}
 \begin{equation}\label{e41} 
  \cD(\tA ) = (\Ker \DG, \Im \LG^*) = (\Im \RG^!, \Ker \DG)
 \end{equation}
with $\Ker \DG \simeq \cD (\oA )$ and  $\Im \LG^* \simeq \Im \RG^! \simeq \cD(\qH)$ (though usually $\Im \LG^* \ne \Im \RG^!$).

 Recall \cite{KLunts} that a \emph{\sor} $\cT =(\cT_1, \cT_2)$, where $\cT_1, \cT_2$ are full traingulated subcategories of $\cT$, means that 
 $\Hom_\cT (T_2, T_1) = 0$ if $T_i \in \cT_i$ and for every object 
 $T \in\cT$ there is an exact triangle $T_1 \to T_2 \to T \pto$, where $T_i\in\kT_i$. Then the following fact holds.
 
 \begin{lemma}\label{l41}\hspace{-3pt}%
 \footnote{\,In \cite[Theorem~7.4]{Ps} this result is proved in the case when this decomposition arises from a recollement.}
  If $\cT = (\cT_1, \cT_2)$ is a \sor\ of a triangulated category $\cT$ , then
 \[
   \dim\cT \le \dim\cT_1 +\dim\cT_2 +1.
   \]
 \end{lemma}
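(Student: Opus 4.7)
My plan is to reduce the whole lemma to a single subadditivity property of the filtration $\<\dM\>_k$, and then invoke it once. Assuming both $d_i := \dim \cT_i$ are finite (otherwise the bound is vacuous), I choose finite sets $\dM_i \sbe \cT_i$ witnessing $\<\dM_i\>_{d_i+1} = \cT_i$, and set $\dM = \dM_1 \cup \dM_2$; this is still finite. The aim is to show $\<\dM\>_{d_1+d_2+2} = \cT$.

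The key auxiliary statement, which I would prove first, is
\[
 \<\dM\>_a \dagg \<\dM\>_b \sbe \<\dM\>_{a+b} \qquad \text{for all } a,b \ge 1.
\]
Granting it, the \sor\ supplies for every $T \in \cT$ a triangle $T_1 \to T_2 \to T \pto$ with $T_i \in \cT_i = \<\dM_i\>_{d_i+1} \sbe \<\dM\>_{d_i+1}$, so $T \in \<\dM\>_{d_1+1} \dagg \<\dM\>_{d_2+1} \sbe \<\dM\>_{d_1+d_2+2}$, and the lemma is done.

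The subadditivity I would prove by induction on $a$. The base $a=1$ is nothing but the definition $\<\dM\>_{b+1} = \<\<\dM\> \dagg \<\dM\>_b\>$. For the inductive step, given a triangle $A \to B \to C \pto$ with $A \in \<\dM\>_{a+1}$ and $B \in \<\dM\>_b$, the main idea is to use the octahedral axiom: if $A$ literally sits in a triangle $A_1 \to A_2 \to A \pto$ with $A_1 \in \<\dM\>$ and $A_2 \in \<\dM\>_a$, then the composition $A_2 \to A \to B$ produces an octahedron whose new triangle reads $A_1[1] \to D \to C \pto$, where $D$ is the cone of $A_2 \to B$. The inductive hypothesis places $D$ in $\<\dM\>_{a+b}$, and the base case applied to $\<\dM\> \dagg \<\dM\>_{a+b}$ then places $C$ in $\<\dM\>_{a+b+1}$.

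The step I expect to be the main obstacle is the reduction to the ``literal'' case just described, since membership in $\<\dM\>_{a+1} = \<\<\dM\> \dagg \<\dM\>_a\>$ guarantees only that $A$ is a direct summand of a shift-sum of objects literally in $\<\dM\> \dagg \<\dM\>_a$. Shifts and sums are harmless, being absorbed by $\<\dM\>$ and $\<\dM\>_a$ separately. For summands the trick will be: if $A \+ A'' = A_0$ is literally in $\<\dM\> \dagg \<\dM\>_a$, apply the octahedral argument instead to the map $(f,0) \colon A_0 \to B$, whose cone decomposes as $C \+ A''[1]$; since $\<\dM\>_{a+b+1}$ is closed under direct summands, $C$ itself lies there. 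Once this bookkeeping is set up, the lemma assembles in one line.
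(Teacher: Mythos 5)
Your proof is correct and follows essentially the same route as the paper: both reduce the lemma to an octahedral-axiom subadditivity property of the filtration $\<\dM\>_k$ under the cone operation $\dagg$ and then apply it to the triangle supplied by the \sor. Your argument merely repackages the paper's nested inclusion \eqref{e42} as the induction $\<\dM\>_a \dagg \<\dM\>_b \sbe \<\dM\>_{a+b}$, and your explicit handling of the direct-summand closure fills in a detail the paper's proof passes over silently.
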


 \begin{proof}
  First we show that for any subsets $\dM,\dN$ of objects of the category $\cT$  
  \begin{equation}\label{e42} 
    \begin{split}
      \<\dM\>_{k+1} \dagg\dN &\sbe \<\dM\>\dagg\< \<\dM\>_k \dagg N\>\sbe \\
      				&\sbe \underbrace{\<\dM\>\dagg\<\<\dM\>\dagg\<\<\dM\>\dagg\dots \<\<\dM\>}_{k+1}\dagg\dN\>\dots\>\>
    \end{split}
  \end{equation}
  Indeed, let $C \in \<\dM\>_{k+1} \dagg \dN$, i.e there is an exact triangle $A \to B \to C \pto$, where $A\in\<\dM\>_{k+1}, B\in\dN$. 
  There is also an exact triangle $A_1 \to A\to A_2 \pto$, where $A_1 \in \<\dM\>_k, A_2 \in \<\dM\>$. The octahedron axiom implies that there are
  exact triangles $A_1 \to B\to B'\pto$ and $A_2 \to B' \to C\pto$. Therefore, $B' \in \<\dM\>_k \dagg \dN$ and $C \in \<\dM\> \dagg \<\<\dM\>_k\dagg \dN \>$.
  
  Now, let $\<\dM\>_{k+1} = \cT_1$ and $\<\dN\>_{l+1} = \cT_2$. Then, for every $T \in\cT$ there is an exact triangle $T_1\to T_2\to T \pto$, where 
  $T_1\in \<\dM\>_{k+1}, T_2 \in\<\dN\>_{l+1}$. But, according to \eqref{e42}, $\<\dM\>_{k+1} \dagg \<\dN\>_{l+1}\sbe\<\dM\cup\dN\>_{k+l+2}$, so 
  $\cT =\<\dM\cup\dN\>_{k+l+2}$ and $\dim\cT \le k+l+1$.
 \end{proof}
 
 As $\oA$  is semisimple, any indecomposable object from $\cD(\oA)$ is just a shifted simple module, so $\cD_f^b(\oA) = \<\oA\>$ and $\ddim\oA = 0$. 
 If $\qH$ is hereditary, every indecomposable object from $\cD_f^b(\qH)$ is a shift of a module. For every module $M$ there is an exact sequence
 $0\to P' \to P \to M \to0$ with projective modules $P, P'$  and, since $\qH$ is semiperfect, every indecomposable projective $\qH$-module is a direct 
 summand of $\qH$. Hence $\cD_f^b(\qH) = \< \qH\>_2$ and $\ddim\qH\le1$.
 
 \begin{corol}\label{c42} 
   $\ddim \qA \le \ddim\qH + 1$. In particular, if $\qA$ is a \bac (or \pbac) ring, $\ddim\qA\le 2$.
 \end{corol}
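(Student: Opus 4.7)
The plan is to combine Proposition~\ref{p32}, the \sor~\eqref{e41}, and Lemma~\ref{l41}. First, Proposition~\ref{p32} reduces the task to bounding $\ddim\tA$ from above by $\ddim\qH+1$. I would therefore focus on $\cD_f^b(\tA)$ and argue that the \sor\ $\cD(\tA)=(\Ker\DG,\Im\LG^*)$ of~\eqref{e41} restricts to a \sor
\[
 \cD_f^b(\tA)=\bigl(\cD_f^b(\tA)\cap\Ker\DG,\ \cD_f^b(\tA)\cap\Im\LG^*\bigr),
\]
whose two pieces are equivalent, via $\sJ^*$ and $\DG$ respectively, to $\cD_f^b(\oA)$ and $\cD_f^b(\qH)$.

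Granting this restriction, Lemma~\ref{l41} immediately gives
\[
 \ddim\tA=\dim\cD_f^b(\tA)\le\ddim\oA+\ddim\qH+1.
\]
Since $\oA=\qA/\qC$ is semisimple, every indecomposable object of $\cD_f^b(\oA)$ is a shift of a simple $\oA$-module and one has $\cD_f^b(\oA)=\<\oA\>_1$, so $\ddim\oA=0$. Combining this with Proposition~\ref{p32} yields $\ddim\qA\le\ddim\tA\le\ddim\qH+1$. For the ``in particular'' clause, when $\qH$ is hereditary every indecomposable in $\cD_f^b(\qH)$ is a shift of a module, and the short exact sequence $0\to P'\to P\to M\to 0$ together with the semiperfectness of $\qH$ shows $\cD_f^b(\qH)=\<\qH\>_2$, so $\ddim\qH\le 1$; for piecewise hereditary $\qH$ the Rouquier dimension is preserved under derived equivalence, so the same bound holds. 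In both cases $\ddim\qA\le 2$.

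The main obstacle is justifying the restriction of the \sor\ to $\cD_f^b(\tA)$. For $T\in\cD_f^b(\tA)$, the distinguished triangle $T_1\to T_2\to T\pto$ supplied by~\eqref{e41} has middle term $T_2\simeq\LG^*\DG T$. The functor $\sG$ is exact (since $\qQ$ is $\tA$-projective) and sends finitely generated modules to finitely generated ones, so $\DG T\in\cD_f^b(\qH)$; under the hereditary (more generally, finite global dimension) hypothesis on $\qH$ that makes the stated bound informative, $\LG^*$ preserves boundedness and finite generation, whence $T_2\in\cD_f^b(\tA)$, and the triangle then forces $T_1\in\cD_f^b(\tA)\cap\Ker\DG\simeq\cD_f^b(\oA)$. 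Orthogonality of the two pieces is inherited from the ambient \sor~\eqref{e41}. Once this restriction is in place, the three ingredients assemble with no further computation.
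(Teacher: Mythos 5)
Your proposal is essentially the paper's own argument: the corollary is obtained exactly as you do it, by combining Proposition~\ref{p32}, the \sor~\eqref{e41}, Lemma~\ref{l41}, and the observations that $\cD_f^b(\oA)=\<\oA\>_1$ (so $\ddim\oA=0$) and $\cD_f^b(\qH)=\<\qH\>_2$ for hereditary $\qH$ (so $\ddim\qH\le1$), the paper leaving the restriction of the decomposition to $\cD_f^b(\tA)$ implicit. The one refinement to make in your patch of that implicit step: $\LG^*$ preserves boundedness and finite generation not because of a finite global dimension hypothesis on $\qH$, but because $\sG^*=\qQ\*_\qH-$ is exact ($\qQ$ is free of rank $2$ as a right $\qH$-module, cf.\ the identification $\sG^*M=M^+$ in Section~\ref{sec5}), which is what you need since the inequality $\ddim\qA\le\ddim\qH+1$ is asserted without any hereditariness assumption on $\qH$.
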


A finite dimensional hereditary algebra is said to be of \emph{Dynkin type} if it has finitely many isomorphism classes of indecomposable 
modules. Such algebras, up to Morita equivalence, correspond to Dynkin diagrams \cite{DR,Gab1}. If the derived category of an algebra $\qH$ 
is equivalent to the derived category of a hereditary algebra of Dynkin type, we say that $\qH$ is \emph{piecewise hereditary of Dynkin type}.\!%
 \footnote{\,It is proved in \cite{Happel} that picewise hereditary algebras of Dynkin type are just iterated tilted algebras of Dynkin type.}
 We say that a \bac (or \pbac) algebra $\qA$ is of \emph{Dynkin type} if there is a \bac pair $\qH \spe \qA$, where $\qH$ is a hereditary 
 (piecewise hereditary) algebra of Dynkin type. For instance, it is so if $\qA$ is a gentle or skewed-gentle algebra \cite{Zembyk2}, or the algebra
 $\rU\rT(n\aK)$ of unitriangular matrices over a field (Example~\ref{ex23}\,(4)), or the algebra $\La_n$ from Example~\ref{ex23}\,(5). 
 In this case $\cD_f^b(\qH) = \<M_1,M_2,\dots,M_m\>_1$, where $M_1,M_2,\dots,M_m$
  are all pairwise non-isomorphic indecomposable $\qH$-modules, so $\ddim\qH = 0$.

In \cite{derdim0} it was proved that $\ddim\qA = 0$ for a finite dimensional algebra $\qA$ if and only if $\qA$ is a piecewise hereditary algebra 
of Dynkin type.

\begin{corol}\label{c43} 
  If $\qA$ is a \bac (or \pbac) algebra of Dynkin type (for instance, gentle or skewed-gentle), but is not piecewise 
  hereditary of Dynkin type, then $\ddim\qA = 1$.
\end{corol}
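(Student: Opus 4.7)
The plan is to sandwich $\ddim\qA$ between $1$ and $1$, using Corollary~\ref{c42} for the upper bound and the characterization from \cite{derdim0} cited just before the corollary for the lower bound.

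For the upper bound, the key observation is that $\ddim\qH = 0$ under the Dynkin hypothesis. Since $\qH$ is piecewise hereditary of Dynkin type, only finitely many indecomposable objects $M_1, M_2, \dots, M_m$ occur in $\cD_f^b(\qH)$, giving $\cD_f^b(\qH) = \<M_1, M_2, \dots, M_m\>_1$ — this is precisely the paragraph of the excerpt immediately preceding the corollary. Plugging $\ddim\qH = 0$ into Corollary~\ref{c42} yields $\ddim\qA \le \ddim\qH + 1 = 1$. Unrolled, this bound arises from Proposition~\ref{p32} (giving $\ddim\qA \le \ddim\tA$) combined with the semi-orthogonal decomposition~\eqref{e41} and Lemma~\ref{l41} (giving $\ddim\tA \le \ddim\oA + \ddim\qH + 1$), together with $\ddim\oA = 0$ since $\oA$ is semisimple.

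For the lower bound, I invoke the theorem of \cite{derdim0}: a finite dimensional algebra $\qA$ satisfies $\ddim\qA = 0$ if and only if $\qA$ is piecewise hereditary of Dynkin type. The hypothesis of the corollary excludes this possibility, so $\ddim\qA \ne 0$ and hence $\ddim\qA \ge 1$. Combined with the upper bound this forces $\ddim\qA = 1$.

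There is no genuine obstacle in the present argument — the statement is a direct plug-in to Corollary~\ref{c42} using the Dynkin-type strengthening $\ddim\qH = 0$ and the external characterization of algebras of derived dimension zero. All the substantive work (the Auslander envelope, the two recollements, the semi-orthogonal decomposition, and the dimension-of-SOD inequality in Lemma~\ref{l41}) has already been carried out in the preceding sections.
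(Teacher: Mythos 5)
Your proposal is correct and matches the paper's intended argument: the upper bound $\ddim\qA\le 1$ comes from Corollary~\ref{c42} together with the observation that $\ddim\qH=0$ when $\qH$ is (piecewise) hereditary of Dynkin type, and the lower bound comes from the characterization in \cite{derdim0} of algebras of derived dimension zero as exactly the piecewise hereditary algebras of Dynkin type. The paper leaves this as an immediate consequence of the preceding paragraphs, and your unrolling through Proposition~\ref{p32}, the \sor\ \eqref{e41} and Lemma~\ref{l41} is precisely how Corollary~\ref{c42} was obtained there.
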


\begin{exam}\label{x44} 
 The path algebra of the commutative quiver
 \[
   \xymatrix@R=1ex{ & 2 \ar[dr]^{\al_1} && 4 \\ 1 \ar[ur]^{\al_0} \ar[dr]_{\be_0} && 3 \ar[ur]^\ga \ar[dr]_{\ga'} && \al_1\al_0=\be_1\be_0\\ & 2' \ar[ur]_{\be_1} && 4' } 
 \]
 is a tilted (hence piecewise hereditary) algebra of type $\tilde{D}_5$. At the same time it is \bac algebra of type $A_4$. Namely, it is a skewed-gentle
 algebra obtained from the path algebra of the quiver $1\to 2\to 3\to 4$ by blowing up the vertices $2$ and $4$.%
 \footnote{\,See \cite{Zembyk} for the construction of blowing up and its relation to nodal algebras.}
\end{exam}

\section{Relation to bimodule categories}
\label{sec5}

In this section we explain how a \sor\ allows to apply to calculations in a triangulated category the technique of matrix problems, namely, of
bimodule categories, as in \cite{SP}.

Let $\cA$ and $\cB$ be additive categories, $\kU$ be an $\cA\mt\cB$-bimodule, i.e. a biadditive functor $\cA\op\xx\cB\to \mathbf{Ab}$. Recall \cite{SP} 
that the \emph{bimodule category} or the \emph{category of elements} of the bimodule $\kU$ is the category $\El(\kU)$ with the set of objects 
$\bigcup_{A\in\cA, B\in\cB} \kU(A, B)$ and the set of morphisms $u \to v$, where $u \in \kU(A, B), v \in \kU(A', B')$ being the set of pairs 
 $\{(\al,\be)\mid \al:A'\to A,\be:B\to B',u\al=\be v\}$. Here we write, as usually, $u\al$ and $\be v$, respectively, instead of $\kU(\al,1_B)u$ and 
 $\kU(1_{A'},\be)v$. Bimodule categories appear when there is a semi-orthogonal decomposition of a triangulated category.
 
 \begin{theorem}\label{t51} 
  Let $(\cA,\cB)$ be a \sor\ of a triangulated category $\cC$. Consider the $\cA\mt\cB$-bimodule $\kU$ such that 
  $\kU(A,B) =\Hom_\cC(A,B)\ (A \in \cA, B \in \cB)$. For every $f : A\to B$ fix a cone $C f$, that is an exact triangle 
  $A\xarr{f} B \xarr{f_1} Cf\xarr{f_2} A[1]$. The map $f\mapsto Cf$ induces an equivalence of categories $\sC : \El(\kU) \ito\cC/\cJ$, where $\cJ$ is the ideal of
  $\cC$ consisting of morphisms $\eta$ such that there are factorizations $\eta = \eta'\xi = \ze\eta''$, where the source of $\eta'$ is in $\cA$ and the target 
  of $\eta''$ is in $\cB$. Moreover, $\cJ^2 = 0$, so $\sC$ induces a bijection between isomorphism classes of objects from $\El(\kU)$ and from $\cC$.\!%
  \footnote{\,This theorem is actually a partial case of \cite[Thm.\,1.1]{SP}.}
 \end{theorem}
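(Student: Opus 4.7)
The plan is to define $\sC$ on morphisms via axiom TR3, check it descends cleanly to $\cC/\cJ$, and reduce essential surjectivity, fullness, faithfulness, and $\cJ^2=0$ all to the single vanishing $\Hom_\cC(B,A)=0$ for $A\in\cA,B\in\cB$ built into the \sor. Throughout I would work with the defining triangles $A\xarr{u}B\xarr{u_1}Cu\xarr{u_2}A[1]$ and $A'\xarr{v}B'\xarr{v_1}Cv\xarr{v_2}A'[1]$ and the long exact sequences obtained from them by applying $\Hom_\cC(-,Cv)$ and $\Hom_\cC(Cu,-)$.

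Essential surjectivity is immediate from the \sor, since every $T\in\cC$ sits in a triangle $A\to B\to T\pto$, so $T\simeq Cf$. To define $\sC$ on a morphism $(\alpha,\beta):u\to v$, I would apply TR3 to the defining compatibility square to produce $\gamma:Cu\to Cv$ with $\gamma u_1=v_1\beta$ and $v_2\gamma=\alpha[1]u_2$. Any two such lifts differ by an expression $v_1\mu u_2$ with $\mu:A[1]\to B'$, and this factors through $A[1]\in\cA$ via $u_2$ and through $B'\in\cB$ via $v_1$, hence lies in $\cJ$. So $\sC$ is a well-defined functor $\El(\kU)\to\cC/\cJ$.

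Faithfulness reduces to the following observation: if $\gamma\in\cJ$, the factorization of $\gamma$ through $\cA$ forces $\gamma u_1$ to lie in $\Hom_\cC(B,\cA)=0$, so $v_1\beta=0$; the injectivity of $v_1\circ:\Hom(B,B')\to\Hom(B,Cv)$ (itself a consequence of $\Hom(B,A')=0$) then yields $\beta=0$, and the dual argument using $v_2\gamma$ gives $\alpha=0$. For fullness I would reverse the construction: given $\gamma:Cu\to Cv$, the vanishing $v_2\gamma u_1\in\Hom(B,A'[1])=0$ produces a unique $\beta$ with $\gamma u_1=v_1\beta$, and dually a unique $\alpha$ with $v_2\gamma=\alpha[1]u_2$; the remaining compatibility condition then holds because its obstruction lies in the kernel of $v_1\circ:\Hom(A,B')\to\Hom(A,Cv)$, hence in the image of $v\circ:\Hom(A,A')\to\Hom(A,B')$, and can be absorbed into the unique choice of $\alpha$.

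For $\cJ^2=0$: given composable $\eta_1,\eta_2\in\cJ$, choose factorizations $\eta_1=\zeta_1\eta_1''$ with target of $\eta_1''$ in $\cB$ and $\eta_2=\eta_2'\xi_2$ with source of $\eta_2'$ in $\cA$, so the middle composite $\xi_2\zeta_1\in\Hom_\cC(\cB,\cA)=0$. The bijection on isomorphism classes then follows by the standard nilpotent lifting: any isomorphism $\bar\gamma$ in $\cC/\cJ$ lifts to $\gamma\in\cC$ with $\gamma\delta-1$ and $\delta\gamma-1$ in $\cJ$, both of square zero and hence units, so $\gamma$ is an actual isomorphism in $\cC$. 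The main technical obstacle will be the compatibility step in fullness: while the obstruction lives in the correct kernel, confirming that it can genuinely be absorbed into $\alpha$ without disturbing the relation $v_2\gamma=\alpha[1]u_2$ requires a careful diagram chase along both triangles simultaneously, using the uniqueness of $\alpha$ and $\beta$ at each stage so as not to introduce circularity.
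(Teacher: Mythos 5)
You follow the paper's route: define $\sC$ by completing squares with TR3, show the completion is well defined modulo $\cJ$, get essential surjectivity from the decomposition, and reduce the rest to $\Hom_\cC(\cB,\cA)=0$. Your treatment of essential surjectivity, of the ambiguity of the cone morphism (the difference of two completions lies in $\cJ$), of faithfulness, of $\cJ^2=0$, and of lifting isomorphisms along the square-zero ideal is correct and agrees with the paper's proof.

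The one genuine gap is the compatibility step in fullness, exactly where you flagged it, and the repair you sketch cannot work as stated. You define $\be$ by $\ga u_1=v_1\be$ and $\al$ by $v_2\ga=\al[1]u_2$, each unique, and then want to correct $\al$ by the element $\al''$ with $\be u-v\al=v\al''$. But any nonzero correction is excluded by the very uniqueness you invoke: if $(\al+\al'')[1]u_2=v_2\ga$ were still to hold, then $\al''[1]u_2=0$, so $\al''[1]$ factors through $-u[1]:A[1]\to B[1]$, and $\Hom_\cC(B[1],A'[1])=0$ forces $\al''=0$. So there is nothing that can be ``absorbed''; what must be proved is that the obstruction $\al''$ vanishes, and the absorption phrasing is circular. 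The fix --- and this is what the paper does --- is to produce $\al$ not from its defining equation but from TR3 applied to the rotated triangles $B\xarr{u_1}Cu\xarr{u_2}A[1]\xarr{-u[1]}B[1]$ and $B'\xarr{v_1}Cv\xarr{v_2}A'[1]\xarr{-v[1]}B'[1]$, starting from the commutative square $\ga u_1=v_1\be$: TR3 yields a morphism $A[1]\to A'[1]$, i.e.\ some $\al'$, satisfying $v\al'=\be u$ and $\al'[1]u_2=v_2\ga$ simultaneously, so $(\al',\be)$ is a morphism of $\El(\kU)$ with $\sC(\al',\be)\equiv\ga\pmod\cJ$. Your uniqueness observation then identifies $\al'$ with your $\al$, so the compatibility is automatic, and uniqueness is only needed for faithfulness. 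With this substitution your argument closes and coincides with the paper's proof.
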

 \begin{proof}
  As $(\cA,\cB)$ is a semi-orthogonal decomposition of $\cC$, every object from $\cC$ occurs in an exact triangle $A\xarr f B\to C\pto$, where 
  $A \in \cA, B \in \cB$, so $f$ is an object from $\El(\kU)$ and $C \simeq Cf$. Let $f' : A' \to B'$ be another object of $\El(\kU)$ and 
  $(\al,\be) : f \to f'$ be a morphism from $\El(\kU)$. Fix a commutative diagram   
  
  \begin{equation}\label{e51} 
  \vcenter{ \xymatrix{ A \ar[r]^f \ar[d]_\al & B \ar[r]^{f_1} \ar[d]_\be & Cf \ar[r]^{f_2} \ar[d]_\ga & A[1] \ar[d]_{\al[1]} \\
  								A' \ar[r]^{f'}  & B' \ar[r]^{f'_1} \ & Cf' \ar[r]^{f'_2}  & A'[1]  }  }
  \end{equation}
 It exists, though is not unique. Let $\ga'$ be another morphism making the diagram \eqref{e51} commutative an set $\eta=\ga-\ga'$. Then $\eta f_1 = 0$, 
 hence $\eta$ factors through $f_2$, and $f_2'\eta = 0$, hence $\eta$ factors through $f_1'$. Thus $\eta\in\cJ$. On the other hand, if $\eta : Cf \to Cf'$ 
 is in $\cJ$, the decomposition $\eta=\eta'\xi$ implies that $\eta f_1 = \eta'\xi f_1 = 0$ and the decomposition $\eta=\ze\eta''$ implies that 
 $f'_2\eta = f'_2\ze\eta'' = 0$, hence the morphism $\ga'=\ga+\eta$ makes the diagram \eqref{e51} commutative. Therefore, the class $\sC(\al,\be)$ of 
 $\ga$ modulo $\cJ$ is uniquely defined, so the maps $f\mapsto Cf$ and $(\al,\be) \mapsto\sC(\al,\be)$ define a functor $\sC : \El(\kU)\to \cC/\cJ$.
 
Let now $\ga : Cf\to Cf'$ be any morphism. Then $f_2'\ga f_1 = 0$, so $\ga f_1 = f_1'\be$ for some $\be:B\to B'$. Hence there is a morphism $\al:A\to A'$ making
the diagram \eqref{e51} commutative, i.e. defining a morphism $(\al,\be) : f \to f'$ such that $\ga \equiv \sC(\al,\be)\pmod\cJ$ . If $(\al',\be')$ is another such 
morphism, $f_1' (\be-\be') = 0$, so $\be-\be'=f'\xi$ for some $\xi:B\to A$. But $\xi=0$, so $\be=\be'$. In the same way $\al=\al'$. Hence the functor $\sC$ 
is fully faithful. As we have already noticed, it is essentially surjective, therefore defines an equivalence $\El(\kU) \ito\cC/\cJ$.
The equality $\cJ^2 = 0$ follows immediately from the definition and the conditions of the theorem.
 \end{proof}
  
We apply Theorem~\ref{t51} to \bac pairs $\qH \sbe\qA$ such that $\qA$ is left noetherian and $\qH$ is left hereditary and finitely generated as 
left $\qA$-module. For instance, it is so in the case of \bac algebras or \bac orders. Then the ring $\tA$ is also noetherian and $\qC$ is projective as left $\qH$-module. 
According to \eqref{e41}, $(\Ker \DG, \Im \LG^*)$ is a semi-orthogonal decomposition of $\cD(\tA )$. Moreover, both $\sG$ and $\sG^*$ map finitely 
generated modules to finitely generated, so the same is valid if we consider their restrictions onto $\cD_f (\tA )$ and $\cD_f (\qH )$.
Note also that $\sG^*$ is exact, so $\sG^*$ can be applied to complexes componentwise. The $\tA$-module $\sG^*M$
 can be identified with the module of columns $M^+ = \smtr{M\\M}$ with the action of $\tA$ given by the matrix multiplication. It gives 
an equivalence of $\cD(\qH)$ with $\Im\LG^*$. As $\qH$ is left hereditary, every complex from $\cD(\qH)$ is equivalent to a direct sum of 
shifted modules (see \cite[Sec.\,2.5]{Keller}). On the other hand, $\Ker \DG \simeq \cD (\oA )$ and $\oA$ is semisimple, since $\qC\spe\fR$. Hence every complex 
from $\cD(\oA)$ is isomorphic to a direct sum of shifted simple $\oA$-modules, which are direct summands of $\oA$.
So, to calculate the bimodule $\kU$, we only have to calculate $\Ext^i_{\tA} (\oA ,M^+)$, where $M$ is an $\qH$-module. 
Note also that $\qC^+$ is a projective $\tA$-module, since $\qC$ is a projective $\qH$-module. Therefore, a projective resolution of $\oA$ is 
$0 \to \qC^+ \xarr{\eps} \qP\to \oA \to0$ and $\pdim_{\tA}\oA=1$. Hence we only have to calculate $\Hom_{\tA}(\oA , M^+ )$ and $\Ext^1_{\tA} (\oA, M^+)$.

\begin{theorem}\label{t52} 
 \begin{enumerate}
 \item $\Hom_{\tA}(\oA,M^+)\simeq\ann_M \qC =\{u\in M \mid \qC u=0\}$. 
 
 \item  $\Ext^1_{\tA} (\oA ,M^+) \simeq \Hom_\qH(\qC,M)/(M/\ann_M \qC)$, where the quotient $M/\ann_M \qC$ embeds into $\Hom_\qH(\qC,M)$ 
 if we map an element $u\in M$ to the homomorphism $\mu_u : c\mapsto cu$.
 \end{enumerate}
\end{theorem}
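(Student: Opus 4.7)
The plan is to apply $\Hom_{\tA}(-,M^+)$ to the projective resolution $0\to\qC^+\xarr{\eps}\qP\to\oA\to 0$ recalled just before the statement. Since both $\qP$ and $\qC^+$ are projective, this produces the four-term exact sequence
\[
0\to\Hom_{\tA}(\oA,M^+)\to\Hom_{\tA}(\qP,M^+)\xarr{\eps^*}\Hom_{\tA}(\qC^+,M^+)\to\Ext^1_{\tA}(\oA,M^+)\to 0,
\]
from which both claims will be read off once the middle terms and the connecting map $\eps^*$ are identified explicitly.

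First I would identify the two middle terms. Using $\qP=\tA e'$ with $e'=\smtr{1&0\\0&0}$, we have $\Hom_{\tA}(\qP,M^+)\simeq e'M^+\simeq M$, where $u\in M$ corresponds to the morphism $\phi_u:\smtr{\al\\c}\mapsto\smtr{\al u\\cu}$. For the second term I would invoke the adjunction $\sG^*\dashv\sG$ established in Section~\ref{sec4}: since $\qC^+=\sG^*\qC$ and $\sG M^+=e''M^+=M$, we obtain $\Hom_{\tA}(\qC^+,M^+)\simeq\Hom_\qH(\qC,M)$, with $\psi'\in\Hom_\qH(\qC,M)$ corresponding to the morphism $\smtr{c_1\\c_2}\mapsto\smtr{\psi'(c_1)\\\psi'(c_2)}$.

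Next I would trace the connecting map $\eps^*$ through these identifications. Because $\eps:\qC^+\hookrightarrow\qP$ is the natural inclusion $\smtr{c_1\\c_2}\mapsto\smtr{c_1\\c_2}$, the composite $\phi_u\cc\eps$ sends $\smtr{c_1\\c_2}$ to $\smtr{c_1u\\c_2u}$, which under the identification above corresponds exactly to $\mu_u:c\mapsto cu$. Thus $\eps^*$ is precisely the map $u\mapsto\mu_u$ from the statement. The two assertions now follow: $\Ker\eps^*=\{u\in M\mid\qC u=0\}=\ann_M\qC$, giving (1), while $\Cok\eps^*=\Hom_\qH(\qC,M)/\{\mu_u\mid u\in M\}=\Hom_\qH(\qC,M)/(M/\ann_M\qC)$, giving (2).

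The only step requiring genuine care is tracing $\eps^*$ through the two identifications, in particular unwinding the unit of $\sG^*\dashv\sG$. Once one observes that this unit is canonically the identity on $\qH\Md$ (because $\sG\sG^*N=e''N^+=N$ for every $N$), the formula $\eps^*(u)=\mu_u$ reduces to a one-line matrix verification.
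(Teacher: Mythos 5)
Your proposal is correct and follows essentially the same route as the paper: apply $\Hom_{\tA}(-,M^+)$ to the resolution $0\to\qC^+\xarr{\eps}\qP\to\oA\to0$, identify $\Hom_{\tA}(\qP,M^+)\simeq M$ and $\Hom_{\tA}(\qC^+,M^+)\simeq\Hom_\qH(\qC,M)$, and observe that $\eps^*$ is $u\mapsto\mu_u$, so that (1) and (2) are the kernel and cokernel. The only cosmetic difference is that you invoke the adjunction $\sG^*\dashv\sG$ where the paper cites full faithfulness of $\sG^*$; these give the same identification.
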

\begin{proof}
  (1)  $\Hom_{\tA}(\oA ,M^+)$ is identified with the set of homomorphisms $\phi: \qP \to M^+$ such that $\phi\eps=0$. A homomorphism 
 $\phi:\qP \to M^+$ is uniquely defined by an element $u\in M$ such that $\phi\smtr{1\\0}=\smtr{u\\0}$. Namely, $\phi\smtr{a\\c}=\smtr{au\\cu}$.
Obviously, $\phi\eps = 0$ if and only if $\qC u = 0$, i.e. $u \in \ann_M \qC$.

(2)  $\Ext^1_{\tA} (\oA,M^+) \simeq \Hom_{\tA}(C^+,M^+)/\Hom_{\tA}(\qP,M^+)\eps$. As the functor 
$\sG^*$ is fully faithful, $\Hom_{\tA}(C^+,M^+) \simeq \Hom_\qH(\qC,M)$. Namely, $\psi :\qC \to M$ induces $\psi^+ : \qC^+\to M^+$ 
mapping $\smtr{a\\b}$ to $\smtr{\psi(a)\\\psi(b)}$. Let $\phi : \qP \to M^+$ corresponds, as above, to an element $u \in M$. Then 
$\phi\eps\smtr{a\\c} =\smtr{au\\cu}$, so it equals $\mu_u$, and $\Hom_{\tA}(P,M^+)\eps$ is identified with $M/\ann_M\qC$ embedded
into $\Hom_\qH(\qC,M)$ as above.
\end{proof}

Actually, in our case an object $\sE$ from the category $\El(\kU)$ (therefore, also an object from $\cD^b(\tA)$) is given by the vertices and
solid arrows of a diagram
\vspace*{-.8em}
 \[
    \xymatrix{  A_n \ar@(ur,ul)@{.>}[]_{\al_n} \ar[d]^{\mu_n} \ar[dr]^{\eta_n} & A_{n+1} \ar@(ur,ul)@{.>}[]_{\al_{n+1}} \ar[d]^{\mu_{n+1}} \ar[dr]^{\eta_{n+1}} & 
   A_{n+2}\ar@(ur,ul)@{.>}[]_{\al_{n+2}}  \ar[d]^{\mu_{n+2}} \ar[dr]^{\eta_{n+3}} &  A_{n+3} \ar@(ur,ul)@{.>}[]_{\al_{n+3}} \ar[d]^{\mu_{n+3}}\hskip1em \cdots 
     \\ M_n \ar@{.>}[r]^{\be_n} \ar@{.>}@(dl,dr)[]_{\ga_n} & M_{n+1} \ar@{.>}[r]^{\be_{n+1}}  \ar@{.>}@(dl,dr)[]_{\ga_{n+1}}
     & M_{n+2} \ar@{.>}[r]^{\be_{n+2}} \ar@{.>}@(dl,dr)[]_{\ga_{n+2}} &M_{n+3}  \ar@{.>}@(dl,dr)[]_{\ga_{n+3}}\hskip1em \cdots}
  \]
 (of arbitrary length), where $A_i$ are $\oA$-modules, $M_i$ are $\qH$-modules, $\mu_i\in\Hom_{\tA}(A_i,M_i^+)$ and 
 $\eta_i\in\Ext^1_{\tA}(A_i,M_{i-1}^+)$. A morphism between $\sE$ and $\sE'$ is given by the dotted arrows, where
  \begin{align*}
   & \al_i\in\Hom_{\oA}(A_i,A'_i)\simeq\Hom_{\tA}(A_i,A'_i),\\
   & \ga_i\in\Hom_\qH(M_i,M'_i)\simeq\Hom_{\tA}(M_i^+,(M'_i)^+),\\
   & \be_i\in\Ext^1_{\qH}(M_i,M'_{i+1})\simeq\Ext^1_{\tA}(M^+_i,(M'_{i+1})^+).
  \end{align*} 
  These morphisms must satisfy the relations 
  \begin{align*}
   & \mu'_i\al_i=\ga_i\mu_i,\\
   & \eta'_i\al_i=\ga_{i+1}\eta_i+\be_i\mu_i.
  \end{align*}

\section{Partial tilting for \bac pairs}
\label{sec6} 

Let $\qH \sbe\qA$ be a \bac pair. Consider the ring $\qB$ of triangular matrices of the form
\[
  \qB=\mtr{\oA &\oH \\ 0&\qH }.
\] 
 Let $e_1=\smtr{1&0\\0&0}$ and $e_2=\smtr{0&0\\0&1}$, $B_1=\qB e_1$ and $B_2=\qB e_2$ be projective $\qB$-modules given by the first 
 and the second column of $\qB$, i.e.
 \[
   B_1=\mtr{\oA\\0},\qquad  B_2=\mtr{\oH\\\qH}.
 \]
A $\qB$-module $M$ is defined by a triple $\bmd{M_1}{M_2}{\chi_M}$, where $M_1=e_1M$ is an $\oA$-module, $M_2=e_2M$ is an $\qH$-module
and $\chi_M:M_2\to M_1$ is an $\qA$-homomorphism such that $\Ker\chi_M\spe\qC M_2$ (it is nesessary, since $\qC M_1=0$). 
Namely, $\chi_M$ is the multiplication by $\smtr{0&1\\0&0}$. We write an element $u\in M$ as a column $\smtr{u_1\\u_2}$,
where $u_1=e_1u,u_2=e_2u$. Then
 \[
  \mtr{a&b\\0&c} \mtr{u_1\\u_2}=\mtr{au_1+\chi_M(bu_2)\\cu_2}.
 \]
 A homomorphism $\al : M \to N$ is defined by two homomorphisms $ \al_1 : M_1 \to N_1$ and $\al_2 :M_2\to N_2$ such that 
 $\al_1\chi_M=\chi_N\al_2$. We write $\al=\smtr{\al_1\\\al_2}$.
 
 \begin{prop}\label{p61} 
 \quad $ \gdim\qB = \max(\gdim \qH, \wdim \oH_\qH  + 1).$ 
 \\[2pt]
  In particular, if $\qH$ is left hereditary and $\oH$  is not flat as right $\qH$-module, then $\gdim \qB = 2$.
 \end{prop}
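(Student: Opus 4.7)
The plan is to reduce the computation to projective dimensions of modules of the form $(0,Y,0)$ and then bound these using a derived tensor construction. Since $\qC \spe \gR$, the quotient $\oA = \qA/\qC$ is semisimple, so every $\qB$-module $(X,0,0)$ is a direct summand of a power of $B_1$ and hence projective. The canonical sequence $0 \to (X,0,0) \to (X,Y,\chi) \to (0,Y,0) \to 0$ then gives $\pdim_\qB(X,Y,\chi) \le \pdim_\qB(0,Y,0)$, so $\gdim \qB = \sup_Y \pdim_\qB(0,Y,0)$ with $Y$ ranging over left $\qH$-modules.

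For the upper bound I take a $\qH$-projective resolution $Q_\bullet \to Y$ of length $n := \pdim_\qH Y$. Since $B_2 \simeq \oH \+ \qH$ as a right $\qH$-module, the complex $B_2 \otimes_\qH Q_\bullet$ of projective $\qB$-modules represents $B_2 \otimes^L_\qH Y$, and its cohomology is $B_2 \otimes_\qH Y$ in degree $0$ and $(\Tor_i^\qH(\oH,Y),0,0)$ in degree $-i$ for $1 \le i \le d := \wdim\oH_\qH$. Crucially these negative-degree cohomologies are projective over $\qB$, so a standard Postnikov-tower argument in $\cD(\qB)$ (the obstructions live in $\Ext$-groups out of a projective source and hence vanish in positive degrees) yields $\tau_{<0}(B_2 \otimes^L_\qH Y) \simeq \bop_{i=1}^{d}(\Tor_i^\qH(\oH,Y),0,0)[i]$. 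The triangle $\tau_{<0}(B_2 \otimes^L_\qH Y) \to B_2 \otimes^L_\qH Y \to B_2 \otimes_\qH Y$ then exhibits $B_2 \otimes_\qH Y$ as the cone of a map between bounded complexes of projectives of amplitudes $[-d,-1]$ and $[-n,0]$, giving $\pdim_\qB(B_2 \otimes_\qH Y) \le \max(n,d+1)$. Coupling with the short exact sequence $0 \to (\oH \otimes_\qH Y,0,0) \to B_2 \otimes_\qH Y \to (0,Y,0) \to 0$ (projective left-hand term) then gives $\pdim_\qB(0,Y,0) \le \max(n,d+1)$, and the upper bound $\gdim \qB \le \max(\gdim\qH, \wdim\oH_\qH + 1)$ follows by taking the supremum over $Y$.

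For the lower bound I argue in two steps. To see $\gdim\qB \ge \gdim \qH$, I construct an explicit projective resolution of $(0,V,0)$ with $P^0 = B_2 \otimes_\qH Q_0$ and $P^i = (\oH \otimes_\qH Q_{i-1},0,0) \+ B_2 \otimes_\qH Q_i$ for $i \ge 1$, obtained by iteratively splitting off the new first-component contribution to the kernel. Applying $\Hom_\qB(-,(0,Y,0))$ kills the first summands (their second components vanish) while $\Hom_\qB(B_2 \otimes_\qH Q_i,(0,Y,0)) = \Hom_\qH(Q_i,Y)$ by adjunction; a direct check that the induced differential is the usual $d_{i+1}^*$ then gives $\Ext^i_\qB((0,V,0),(0,Y,0)) = \Ext^i_\qH(V,Y)$. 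To see $\gdim\qB \ge \wdim\oH_\qH + 1$ when $d \ge 1$, I pick $Y$ with $\Tor_d^\qH(\oH,Y) \ne 0$, set $N := (\Tor_d^\qH(\oH,Y),0,0)$, and apply $\RHom_\qB(-,N)$ to the cone triangle above: the outer terms vanish since $\Ext^*_\qB(B_2 \otimes^L_\qH Y,N) = \Ext^*_\qH(Y,N_2) = 0$ and $\Ext^k_\qB((\Tor_j^\qH(\oH,Y),0,0),N) = 0$ for $k \ge 1$, so the connecting map forces $\Ext^{d+1}_\qB(B_2 \otimes_\qH Y,N) \simeq \Ext^d_\qB(\tau_{<0},N) \simeq \Hom_\oA(\Tor_d^\qH(\oH,Y),\Tor_d^\qH(\oH,Y)) \ne 0$; transferring through the above short exact sequence gives $\pdim_\qB(0,Y,0) \ge d+1$. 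The edge case $d=0$ is handled trivially: $(0,Y,0)$ is never projective for nonzero $Y$, so $\gdim\qB \ge 1 = d+1$.

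The main technical obstacle is the splitting of $\tau_{<0}(B_2 \otimes^L_\qH Y)$ in $\cD(\qB)$, addressed by the Postnikov argument above. The ``in particular'' claim is then immediate: left hereditariness of $\qH$ forces $\gdim\qH = 1$ and bounds the flat dimension of every left $\qH$-module (and hence $\wdim\oH_\qH$) by $1$; non-flatness of $\oH_\qH$ then pins down $\wdim\oH_\qH = 1$, and the formula yields $\gdim\qB = \max(1,2) = 2$.
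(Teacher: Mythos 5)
Your argument is correct in substance, but it takes a genuinely different route from the paper. The paper's proof is a two-line reduction to Palm\'er--Roos: by \cite[Thm.\,5]{P-Roos}, $\gdim\qB\le n$ iff $\gdim\qH\le n$ and $\mR^n\!\Hom_{\oA}(\oH\*_\qH-,-)=0$, and since $\oA$ is semisimple the latter functor collapses to $\Hom_{\oA}(\Tor^\qH_n(\oH,-),-)$, so the condition is exactly $\wdim\oH_\qH\le n-1$. You instead reprove this special case of the Palm\'er--Roos formula from scratch: reduction of $\gdim\qB$ to $\sup_Y\pdim_\qB\bmd{0}{Y}{0}$, the splitting of $\tau_{<0}(B_2\*^L_\qH Y)$ into shifted projectives (this is precisely where the semisimplicity of $\oA$ enters for you, as it does for the paper), the resulting upper bound $\max(\pdim_\qH Y,\wdim\oH_\qH+1)$, and the two lower bounds via $\Ext^i_\qB\bigl(\bmd{0}{V}{0},\bmd{0}{Y}{0}\bigr)\simeq\Ext^i_\qH(V,Y)$ and the nonvanishing of $\Ext^{d+1}_\qB(B_2\*_\qH Y,N)$, which I checked are sound (the key points being that $(X,0,0)$ is always $\qB$-projective and that $\RHom_\qB(B_2\*^L_\qH Y,N)\simeq\RHom_\qH(Y,e_2N)=0$). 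What your route buys is self-containedness and an explicit description of projective resolutions over $\qB$; what the paper's citation buys is brevity and the general formula without the semisimplicity hypothesis on the top corner.

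One genuine (though easily repaired) slip: in the edge case $d=0$ you claim that $\bmd{0}{Y}{0}$ is never projective for $Y\ne0$. This is false in general: whenever $\qC Y=Y$ (e.g.\ $Y=\qH e_1$ when $A^1\ne 0$, since $\qC e_1=\qH e_1$) one has $\oH\*_\qH Y=0$, so $\bmd{0}{Y}{0}\simeq B_2\*_\qH Y$, which is projective for $Y$ projective. The conclusion you need, $\gdim\qB\ge 1$, still holds: since $\qA\ne\qH$ we have $\oH\ne 0$, so $\smtr{0&\oH\\0&0}$ is a nonzero square-zero two-sided ideal and $\qB$ cannot be semisimple; alternatively, the structure map $\oH\*_\qH P_2\to P_1$ of any projective $\qB$-module is injective (being a direct summand of that of a free module), so $\bmd{0}{\qH}{0}$ is not projective. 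With that repair, and noting that left hereditariness gives $\gdim\qH\le 1$ (not necessarily $=1$) -- which is all the ``in particular'' statement requires -- your proof is complete.
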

 \begin{proof}
 \cite[Thm.\,5]{P-Roos} shows that $\gdim\qB\le n$ \iff 
 \[
   \gdim\qH \le n\ \text{ \rm and }\ \mR^n\!\Hom_{\oA}(\oH\*_\qH-,- ) = 0.
 \]
 As the ring $\oA$  is semisimple,
 \[
  \mR^n\!\Hom_{\oA}(\oH\*_\qH- ,-)=\Hom_{\oA }(\Tor^\qH_n(\oH ,-),-). 
 \]
  It implies the first assertion. The second is obvious, since $\Tor_1^{\qH}(\oH,-)=0$ \iff $\oH$ is flat as right $\qH$-module.
 \end{proof}
 
 We denote by $R$ the $\qB$-module given by the triple $\bmd{\qH/\qA}{\qH}{\pi}$, where $\pi : \qH \to \qH/\qA$ is the natural surjection.
 
 \begin{prop}\label{p62} 
  \begin{enumerate}
  \item  $\End_\qB R \simeq \qA\op$.
  \item  $\pdim_\qB R=1$.
  \item  $\Ext^1_\qB(R,R)=0$.
  \end{enumerate}
  \smallskip
  \emph{Recall that the conditions (2) and (3) mean that $R$ is a \emph{partial tilting} $\qB$-module.}
 \end{prop}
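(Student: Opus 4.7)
The plan is to build a length-one projective resolution of $R$ and then read off all three claims from it. Concretely, I would write down the short exact sequence
\[
0 \to B_1 \to B_2 \to R \to 0
\]
of $\qB$-modules, where the map $B_2 \to R$ is the identity on the $\qH$-component and the natural surjection $\oH = \qH/\qC \to \qH/\qA$ on the first component (well-defined since $\qC \sbe \qA$); its kernel is the triple $\bmd{\qA/\qC}{0}{0}$, which is canonically $B_1$ via $\qA/\qC \simeq \oA$. Part (2) then reduces to checking that $R$ is not projective, equivalently that this sequence does not split. Any splitting $s = \smtr{s_1\\s_2} : R \to B_2$ would force $s_2 = \Id_\qH$, and then the compatibility $\chi_{B_2} s_2 = s_1 \chi_R$ would force $s_1$ to be a well-defined section of $\qH/\qC \to \qH/\qA$, requiring $\qA \sbe \qC$; this makes $\qA$ a two-sided ideal of $\qH$ containing $1$, contradicting $\qA \ne \qH$.

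For (1), a $\qB$-endomorphism $\al = \smtr{\al_1\\\al_2}$ of $R$ consists of an $\qH$-linear map $\al_2 : \qH \to \qH$---necessarily right multiplication by some $h \in \qH$---and an $\oA$-linear map $\al_1 : \qH/\qA \to \qH/\qA$, subject to $\pi\al_2 = \al_1\pi$. The compatibility forces $\qA h \sbe \qA$ (hence $h \in \qA$, since $1 \in \qA$) and determines $\al_1$ as right multiplication by $h$; $\oA$-linearity of $\al_1$ is automatic because $\qC$ annihilates $\qH/\qA$. Since composition corresponds to multiplication of $h$'s in the opposite order, this yields the anti-isomorphism $\qA \ito \End_\qB R$.

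For (3), I would apply $\Hom_\qB(-, R)$ to the resolution above. Using the standard identification $\Hom_\qB(B_i, R) = e_i R = R_i$, the long exact sequence collapses to
\[
0 \to \End_\qB R \to R_2 \xarr{\de} R_1 \to \Ext^1_\qB(R, R) \to 0.
\]
Tracing the generator $e_1 \in B_1$ through the inclusion $B_1 \hookrightarrow B_2$ and then through a hom $B_2 \to R$ corresponding to $r_2 \in R_2 = \qH$ shows that $\de$ is precisely the structure map $\chi_R = \pi : \qH \to \qH/\qA$, which is surjective. Hence $\Ext^1_\qB(R, R) = 0$. The one spot requiring real care is this identification of $\de$ with $\pi$; everything else is bookkeeping with the triple description of $\qB$-modules.
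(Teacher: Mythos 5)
Your proposal is correct and follows essentially the same route as the paper: the projective resolution $0\to B_1\to B_2\to R\to 0$ gives (2), endomorphisms of $R$ are identified with right multiplications by elements of $\qA$ for (1), and (3) amounts to the surjectivity of $\Hom_\qB(B_2,R)\to\Hom_\qB(B_1,R)$, which you express via the connecting map being $\pi$ and the paper expresses by extending any $B_1\to R$ to $B_2\to R$. Your explicit non-splitting check for (2) (the paper instead invokes minimality of the resolution) and the long-exact-sequence bookkeeping are only presentational differences.
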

 \begin{proof}
 The minimal projective resolution of $R$ is 
 \[
   0\to B_1\xarr\eps B_2 \to R\to 0,
 \]
where $\eps$ is the embedding, which gives (2). Any endomorphism $\ga$ of $R$ induces a commutative diagram
\[
   \xymatrix{ B_1 \ar[r]^\eps \ar[d]_{\ga_1} & B_2 \ar[d]^{\ga_2}\\ B_1 \ar[r]^\eps & B_2  }
\]
 As $\End_\qB B_2 \simeq \qH\op$, $\ga_2$ is given by multiplication with an element $h \in \qH$ on the right. If there is a commutative 
 diagram as above, necessarily $h \in\qA$, which proves (1). Finally, a homomorphism $\al : B_1 \to R$ maps the generator $\smtr{1\\0}$ of
 $B_1$ to an element $\smtr{\bar h\\0} \in R$. If $h$ is a preimage of $\bar h$ in $\qH$, then $\al$ extends to the homomorphism
 $B_2\to R$  that maps the generator $\smtr{0\\1}$ of $B_2$ to $\smtr{0\\h}\in R$. It implies (3).
 \end{proof}
 
 Now Theorem~\ref{t11} applied to the module $R$ gives the following result.
 
 \begin{theorem}\label{t63} 
 \begin{enumerate}
 \item  The functor $\sF = \RHom(R,- )$ induces an equivalence $\Tri(R)\ito \cD(\qA)$.

 \item  $\Ker\sF$ consists of complexes $C$ such that the map $\chi_{H^k(C)}$ is bijective for all $k$.
 \item  There is a recollement diagram
 \[
  \xymatrix{ \Ker\sF \ar[rr]|{\,\sI\,} && \cD(\qB) \ar@/_1em/[ll]|{\,\sI^*\,}  \ar@/^1em/[ll]|{\,\sI^!\,}  \ar[rr]|{\,\sF\,}  && 
  	\cD(\qA)  \ar@/_1em/[ll]|{\,\sF^*\,}  \ar@/^1em/[ll]|{\,\sF^!\,} }
\]
 \end{enumerate}
 \smallskip
 \emph{Actually, the claim (2) means that a complex $C$ is in $\Ker \sF$ if and only if its cohomologies are direct sums of $\qB$-modules of the form
  $\bmd U U {1_U}$, where $U$ is a simple $\oH$-module.}
 \end{theorem}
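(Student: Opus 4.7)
Parts (1) and (3) should fall out of Corollary~\ref{c12} applied to the singleton $\dR=\{R\}$ inside $\cA=\qB\Md$. To invoke it I need two inputs, both supplied by Proposition~\ref{p62}. First, $R$ is a compact object of $\cD(\qB)$: the resolution $0\to B_1\xarr{\eps}B_2\to R\to 0$ has finitely generated projective terms, so $R\in\Per(\qB)$. Second, $\Ext^m_\qB(R,R)=0$ for all $m\ne 0$: the case $m<0$ is trivial, $m=1$ is Proposition~\ref{p62}(3), and $m\ge 2$ follows from $\pdim_\qB R=1$. With these in hand, the DG-category $\cR$ is concentrated in degree $0$ with morphism ring $\End_\qB(R)\simeq\qA\op$ by Proposition~\ref{p62}(1), so $\cD(\cR\op)\simeq\cD(\qA)$. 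Corollary~\ref{c12} then yields the equivalence $\Tri(R)\ito\cD(\qA)$, and the recollement half of Theorem~\ref{t11}(2) gives (3).

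For part~(2), I would compute $\sF C=\RHom_\qB(R,C)$ using the same resolution. Applying $\RHom_\qB(-,C)$ to the exact triangle $B_1\to B_2\to R\pto$ and using the identifications $\RHom_\qB(B_i,C)=\Hom_\qB(\qB e_i,C^\bullet)=e_i C^\bullet=C_i^\bullet$, I obtain an exact triangle
\[
\sF C\to C_2^\bullet\xarr{\psi}C_1^\bullet\pto
\]
where $\psi$ is induced by $\eps$. The key point is the identification $\psi=\chi_C^\bullet$. By Yoneda the natural transformation $\Hom_\qB(B_2,-)\to\Hom_\qB(B_1,-)$ corresponds to $\eps\in\Hom_\qB(B_1,B_2)\simeq e_1 B_2$, and direct inspection of the matrix description of $R$ shows that $\eps$ is the inclusion of $B_1=\smtr{\oA\\0}$ into $B_2=\smtr{\oH\\\qH}$ in the top slot, so that $\eps(e_1)=\smtr{1\\0}$. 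Unwinding the identification yields $\psi(v)=\chi_C(v)$ for $v\in C_2^\bullet$. Therefore $\sF C=0$ iff $\chi_C^\bullet\colon C_2^\bullet\to C_1^\bullet$ is a quasi-isomorphism iff $H^k(\chi_C)$ is bijective for every $k$. Since $\chi_C$ is simply multiplication by the constant element $\smtr{0&1\\0&0}\in\qB$, it commutes with cohomology, so $H^k(\chi_C)=\chi_{H^k(C)}$, which is the claim of (2).

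For the emphasized remark, suppose $\chi_M\colon M_2\to M_1$ is a bijection and set $U:=M_2$, identifying $M_1$ with $U$. Since $M_1$ is an $\oA$-module, $\qC U=\qC M_1=0$; because $\qC$ is a two-sided $\qH$-ideal by Proposition~\ref{p25}, $U$ acquires a canonical $\oH=\qH/\qC$-module structure, and the triple defining $M$ is isomorphic to $\bmd U U {1_U}$. Finally, $\qC\spe\gR=\rad\qH$, so $\oH$ is a quotient of the semisimple ring $\qH/\rad\qH$, hence semisimple; therefore $U$ decomposes into a direct sum of simple $\oH$-modules, as claimed.

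The main obstacle is the concrete identification of the connecting map $\psi$ with $\chi_C$; once the bookkeeping for $\eps$ is in place, the remainder is a direct application of the machinery of Section~\ref{sec1} together with the elementary observation that $\qH/\qC$ is semisimple.
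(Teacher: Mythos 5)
Your proposal is correct. Parts (1) and (3) are argued exactly as in the paper: Proposition~\ref{p62} shows that $R$ is isomorphic in $\cD(\qB)$ to the perfect (hence compact) complex $0\to B_1\xarr{\eps}B_2\to 0$, has no self-extensions in nonzero degrees and endomorphism ring $\qA\op$, so Theorem~\ref{t11} and Corollary~\ref{c12} give the equivalence $\Tri(R)\ito\cD(\qA)$ and the recollement. For part (2), however, you take a genuinely different route. The paper works at the cochain level: it identifies $\Ker\sF$ with $P^\perp$ and explicitly describes a morphism $P\to C[k]$ by a pair of elements $(x,y)$ with $d_2^kx=0$, $d_1^{k-1}y=\chi_kx$, and a null-homotopy by elements $(z,t)$, then checks by hand that vanishing of all such morphism spaces is equivalent to injectivity and surjectivity of every $\och_k=\chi_{H^k(C)}$. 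You instead apply $\RHom_\qB(-,C)$ to the triangle $B_1\to B_2\to R\pto$, identify the resulting map $C_2^\bullet\to C_1^\bullet$ with $\chi_C$ (the same identification the paper itself makes, for modules only, in the proof of Corollary~\ref{c64}), and conclude that $\sF C=0$ iff $\chi_C$ is a quasi-isomorphism, i.e. iff all $\chi_{H^k(C)}$ are bijective. Your argument buys brevity and makes Corollary~\ref{c64} an immediate byproduct, at the cost of relying on the standard triangle/cone formalism, while the paper's computation is more elementary and self-contained at the element level; you also supply an explicit verification of the concluding remark (using $\qC\spe\rad\qH$ to see $\oH$ is semisimple and transporting the $\oA$-structure along $\chi_M$), which the paper only states without proof.
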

 
 $\sF$ is a partial tilting functor in the sense of Corollary~\ref{c12}.
 
 \begin{proof}
 (1) and (3) follow from Proposition~\ref{p62} and Theorem~\ref{t11}, since the complex $P:\ 0\to B_1\xarr\eps B_2\to0$
  is perfect, hence compact, and isomorphic to $R$ in $\cD(\cB)$. 
 To find  $\Ker\sF$, consider a complex
 \[
  C:\ \cdot \to C^{k-1} \xarr{d^{k-1}} C^k \xarr{d^k} C^{k+1}\to\cdots
 \] 
 where $C^k$ is defined by a triple $\bmd{C^k_1}{C^k_2}{\chi_k}$ and $d^k =\smtr{d^k_1\\d^k_2}$, where $d^k_1\chi_k =\chi_{k+1}d^k_2$
 for all $k$. Note that $C_i = (C_i^k, d_i^k )\ (i = 1, 2)$ are complexes, $(\chi_k)$ is a homomorphism of complexes and 
 $H^k(C) =\bmd{H^k(C_1)}{H^k(C^2)}{\och_k}$, where $\och_k = \chi_{H^k(C)}$ is induced by $\chi_k$. A homomorphism $P \to C[k]$
 is a pair of homomorphisms $\al:B_2 \to C^k,\,\be:B_1\to C^{k-1}$ such that $\al_1\pi=\chi_k\al_2,\,\be_2 =0,\,d^k_i\al_i =0\ (i=1,2)$
 and $d^{k-1}\be_1=\al_1|_{\oA}$. Let $\al_2(1) = x \in C^k_2$ and $\be_1(1) = y \in C^{k-1}_1$. These values completely define 
 $\al$ and $\be$. The conditions for $\al$ and $\be$ mean that $d^k_2x = 0$ and $d^{k-1}y = \chi_kx$.
 
 This morphism is homotopic to zero if and only if there are maps $\si : B_2\to C^{k-1}$ and $\tau : B_1\to C^{k-2}$ such that 
 $\al = d^{k-1}\si$ and $\be = \si\eps+d^{k-2}\tau$. Again $\si$ is defined by the element $z = \si_2(1) \in C^{k-1}_2$ and $\tau$ is 
 defined by the element $t = \tau_1(1) \in C^{k-2}_1$. Then the conditions for $\al$ and $\be$ mean that $x=d_2^{k-1}z$ and 
 $y=\chi_{k-1}z+d_1^{k-2}t$.
 
 Suppose that any homomorphism $P \to C[k]$ is homotopic to zero. Let $\bar{x}  \in H^k(C^2)$ be such that $\och_k(\bar{x}) = 0$ and 
 $x \in \Ker d^k_2$ be a representative of $\bar{x}$. Then $\chi_k(x) = d^{k-1}_1y$ for some $y \in C^{k-1}$, so the pair $(x,y)$ defines a 
 homomorphism $P \to C[k]$. Therefore, there must be $z \in C^{k-1}_2$ such that $x=d^{k-1}z$, thus $\bar{x } = 0$ and $\och_k$ is injective. 
 Let now $\bar{y} \in H^{k-1}(C_2)$ and $y \in C^{k-1}_2$ be its representative. Then the pair $(0, y)$ defines a homomorphism 
 $P \to C[k]$, so there must be elements $z \in C^{k-1}_2$ and $t\in C^{k-2}_1$ such that $d_1^{k-1}z = 0$ and $y = \chi_{k-1}z + d_1^{k-2}t$. 
 Hence $\bar{y} = \och_{k-1}(\bar{z})$, so $\och_{k-1}$ is surjective. 
As it holds for all $k$, we have that all maps $\och_k$ are bijective.

 On the contrary, suppose that all $\och_k$ are bijective. If a pair $(x,y)$ defines a homomorphism $P \to C[k]$, then $\chi_k(x) = d_1^{k-1}y$, 
 so $\och_k(x) = 0$. Therefore, $\bar{x}=0$, i.e. $x=d^{k-1}_2z$ for some $z\in C^{k-1}_2$ and $\chi_kx=d^{k-1}_1\chi_{k-1}z$. Then
 $d^{k-1}_1(y-\chi_{k-1}z)=0$, hence there is an element $z' \in C^{k-1}_2$ such that $d_2^{k-1}z' = 0$ and the cohomology class of 
 $y - \chi_{k-1}z$ equals $\och_{k-1}\bar{z}'$, i.e. $y-\chi_{k-1}z=\chi_{k-1}z'+d_1^{k-2}t$ for some $t$. Then $x = d^{k-1}_2(z+z')$ and 
 $y=\chi_{k-1}(z+z') + d_1^{k-2}t$, so this homomorphism is homotopic to zero.
 \end{proof}
  
  As usually, we identify the category $\qA\Md$ with the full subcategory of $\cD(\qA)$ consisting of the complexes $C$ concentrated in degree 0. 
  The following result shows how the parftial titling functor $\sF$ behaves with respect to modules.
  
  \begin{corol}\label{c64} 
  Let a $\qB$-module $M$ be given by the triple $\bmd{M_1}{M_2}{\chi_M}$.
  \begin{enumerate}
  \item  $\sF M \in \qA\Md$ if and only if $\chi_M$ is surjective. Namely, then $\sF M\simeq\Ker\chi_M$.
  \item  $\sF M \in \qA\Md[1]$ if and only if $\chi_M$ is injective. Namely, then $\sF M\simeq\Cok\chi_M[1]$.
  \end{enumerate}
  \end{corol}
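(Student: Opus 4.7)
The plan is to compute $\sF M = \RHom_\qB(R,M)$ explicitly using the minimal projective resolution of $R$ exhibited in the proof of Proposition~\ref{p62}, namely
\[
0 \to B_1 \xarr{\eps} B_2 \to R \to 0.
\]
Applying $\Hom_\qB(-,M)$ yields a two-term complex concentrated in cohomological degrees $0$ and $1$, and the corollary will follow by reading off its cohomology.

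First, I would use the standard identifications $\Hom_\qB(B_i,M) \simeq e_i M$ to rewrite the terms as $M_2$ in degree $0$ and $M_1$ in degree $1$. Next, I would verify that under these identifications the connecting map $\eps^* : \Hom_\qB(B_2,M) \to \Hom_\qB(B_1,M)$ becomes exactly $\chi_M : M_2 \to M_1$. Concretely, the embedding $\eps$ sends the generator $e_1 \in B_1$ to $\smtr{1\\0} \in B_2$, which equals $e_{12}\cdot e_2$ for $e_{12} = \smtr{0&1\\0&0} \in \qB$; then the action formula
\[
\mtr{a&b\\0&c} \mtr{u_1\\u_2} = \mtr{au_1 + \chi_M(bu_2)\\cu_2}
\]
gives $e_{12}\smtr{0\\m_2} = \smtr{\chi_M(m_2)\\0}$, exactly matching $\chi_M$.

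Consequently $\sF M$ is represented by the complex $[M_2 \xarr{\chi_M} M_1]$ with $M_2$ in degree $0$, so $H^0(\sF M) = \Ker\chi_M$ and $H^1(\sF M) = \Cok\chi_M$, both carrying their natural $\qA$-module structures via the ring isomorphism $\End_\qB R \simeq \qA\op$ of Proposition~\ref{p62}(1). Identifying $\qA\Md$ with the full subcategory of $\cD(\qA)$ of complexes concentrated in degree $0$, we then have $\sF M \in \qA\Md$ iff $H^1(\sF M) = 0$, i.e., iff $\chi_M$ is surjective, in which case $\sF M \simeq \Ker\chi_M$; the assertion for $\qA\Md[1]$ is the symmetric statement with $H^0 = 0$, i.e., $\chi_M$ injective and $\sF M \simeq \Cok\chi_M[1]$.

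There is no serious obstacle: the argument is a direct calculation with a length-one projective resolution. The only point demanding care is the bookkeeping in the second step — checking that the abstract pullback $\eps^*$ really becomes $\chi_M$ rather than some twist thereof — and, at the end, confirming that the $\qA$-module structures on $\Ker\chi_M$ and $\Cok\chi_M$ induced by $\End_\qB R \simeq \qA\op$ are the ones transported through the equivalence $\Tri(R)\ito\cD(\qA)$ of Theorem~\ref{t63}(1).
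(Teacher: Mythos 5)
Your proposal is correct and follows essentially the same route as the paper: apply $\Hom_\qB(-,M)$ to the resolution $0\to B_1\xarr{\eps}B_2\to R\to0$, identify the terms with $M_2$ and $M_1$ and the induced map with $\chi_M$, and read the claim off the two-term complex $0\to M_2\xarr{\chi_M}M_1\to0$. The paper's proof is exactly this computation, so no further comparison is needed.
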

  \begin{proof}
  Note that $\Hom_\qB(B_1,M)\simeq M_1$, $\Hom_\qB(B_2,M)\simeq M_2$ and if $\phi:B_2\to M$ maps $\smtr{0\\1}$ to $\smtr{0\\x}$, then
  $\phi\eps$ maps $\smtr{1\\0}$ to $\smtr{\chi_M(x)\\0}$. Therefore, $\RHom_\qB(R,M)$ is the complex
  \[
    0\to M_2\xarr{\chi_M} M_1\to 0,  
  \]
  which proves the claim.
  \end{proof}
  
  \begin{remk}\label{r65} 
  There are several derived equivalences related to $\tA$.
  \begin{enumerate}
  \item If $\qA$ is a \bac order, it is known (see \cite{BDG}) that the complex $T=B_1[1]\+\qH^+$, where $B_1=\smtr{\oA\\0}$, is a tilting complex for $\tA$
  and $(\End_{\cD(\tA)})\op T\simeq\qB$, hence $\tA$ is derived equivalent to $\qB$. Nevertheless, in general situation of \bac rings 
  (even of \bac algebras) it is not so. First of all,  $\Hom_{\tA}(B_1,\qH^+)\simeq\ann_\qH\qC$, so it can happen that $\Hom_{\cD(\tA)}(T,T[1])\ne0$. 
  It is so, for instance, for the pair $\big(\rT(n,\aK),\rU\rT(n,\aK)\big)$ from Example~\ref{e23}\,(4), since in this case the matrix unit $e_{nn}$ belongs to
  $\ann_\qH\qC$. It is also so for Example~\ref{e23}\,(5).
  Moreover, even if $\ann_\qH\qC=0$, one can see that $\oH'=\Ext^1_{\tA}(B_1,\qH^+)\simeq \qC^{-1}/{_\qC\qH}$, where 
  $\qC^{-1}=\Hom_\qH(\qC,\qH)$ and ${_\qC\qH}=\qH/\ann_\qH\qC$ is naturally embedded into $\qC^{-1}$. Therefore, in this case
  \[
    (\End_{\cD(\tA)}T)\op \simeq\qB'=\mtr{\oA & \oH'\\ 0& \qH},
  \]
  which need not coincide with $\qB$ (see Example~\ref{e66} below). If $\qH$ is a hereditary order, then $\ann_\qH\qC=0$ and $\oH'\simeq\oH$, 
  hence $\qB'\simeq\qB$,  in accordance with \cite{BDG}.
 
 \medskip
 \item  On the other hand, set $T'=\smtr{\qA\ &\qH/\qA\\ \qC\ &\oH}$ considered as left $\tA$-module. One can check that it is a tilting module for $\tA$
  and
  \[
  ( \End_{\cD(\tA)}T')\op\simeq\tB= \mtr{\qA&\qH/\qA\\0&\oA},  
  \]
  hence $\tA$ is derived equivalent to $\tB$. Unfortunately, this ring can be not so good from the homological point of view. At least, it is not better than
  $\qA$ itself. Namely, as one can easily check,
    $$\gdim\tB=\max(\gdim\qA,1+\pdim_\qA(\qH/\qA)),$$
  which is either $\gdim\qA$ or (more often) $\gdim\qA+1$.

  \medskip
  \item One more observation. Consider the right $\tA$-modules $(\oA\ 0)$ and $(\qC\ \qH)$. One can check that $T''=(\oA\ 0)[1]\+(\qC\ \qH)$ is
  a tilting complex for $\cD(\tA\op)$ and 
  \[
  \End_{\cD(\tA\op)}T''\simeq\qB''=\mtr{\oA&0\\\oH&\qH},
  \]
  hence $\tA\op$ is derived equivalent to $(\qB'')\op$.
  
  Note that the functor $P\mapsto\Hom_\qR(P,\qR)$ induces an exact duality $\Per(\qR)\to\Per(\qR\op)$ for any ring $\qR$. Hence 
 $\Per(\tA)\simeq\Per(\qB'')$.
  \end{enumerate}
  \end{remk}
  
   \begin{exam}\label{e66} 
   Let $\qH=\sT(3,\aK)$ and $\qA=\{ (a_{ij})\in\qH \mid a_{11}=a_{22}\}$. Set $H_i=\qH e_{ii}$ and $U_i=H_i/\rad H_i$. 
   Then $\qC=\{ (a_{ij})\in\qH \mid a_{11}=a_{22}=0\}$, hence $\oH=U_1\+U_2$. On the other hand, $\qC=\rad H_2\+H_3\simeq H_1\+H_3$,
   so $\qC^{-1}=\Hom_\qH(\qC,\qH)$ can be identified with the set of $3\xx2$ matrices $(b_{ij})$ such that $b_{12}=b_{22}=0$.
   One can check that ${_\qC\qH}$ is identified with the subset $\{(b_{ij})\mid b_{11}=0\}\sb\qC^{-1}$ and
   $\oH'\simeq U_2\not\simeq \oH$ (even $\dim_\aK\oH'\ne\dim_\aK\oH$).
   \end{exam}

\end{document}